\providecommand{\U}[1]{\protect\rule{.1in}{.1in}}
\newtheorem{theorem}{Theorem}[section]
\newtheorem{corollary}[theorem]{Corollary}
\newtheorem{lemma}[theorem]{Lemma}
\newtheorem{remark}[theorem]{Remark}
\newcommand{\eps}{\varepsilon}
\DeclareMathOperator{\dt}{dt}
\newcommand{\R}{\mathbb{R}}
\newcommand{\N}{\mathbb{N}}
\newcommand{\cA}{\mathcal A}
\newcommand{\cC}{\mathcal C}
\newcommand{\cD}{\mathcal D}
\DeclareMathOperator{\e}{e}
\newcommand{\Id}{\textrm{Id}}
\DeclareMathOperator{\im}{Im}
\author[C. M. Silva]{C\'esar M. Silva}
\email{csilva@ubi.pt}
\address{
Universidade da Beira Interior
\\
Rua Marqu\^es d'\'Avila e Bolama,
6201-001 Covilh\~a
Portugal.\\
}
\begin{document}
\title[Admissibility and generalized nonuniform dichotomies for discrete dynamics]{Admissibility and generalized nonuniform dichotomies for discrete dynamics}

\begin{abstract}
We obtain characterizations of nonuniform dichotomies, defined by general growth rates, based on admissibility conditions. Additionally, we use the obtained characterizations to derive robustness results for the considered dichotomies. As particular cases, we recover several results in the literature concerning nonuniform exponential dichotomies and nonuniform polynomial dichotomies as well as new results for nonuniform dichotomies with logarithmic growth.
\end{abstract}

\subjclass[2010]{34D09,37D25}
\thanks{C. M. Silva was partially supported by FCT through CMUBI (project UIDB/MAT/00212/2020).}
%\textbf{Data availability:} This manuscript has no associated data.}
\keywords{admissibility, robustness, $\mu$-dichotomies}
\maketitle

\section{Introduction}

Let $X$ be a Banach space and $A(t):X \to X$ be a family of linear bounded operators varying continuously with $t \in \R_0^+$. In the finite dimensional setting ($X=\R^n$), Perron observed a relation between exponential stability of the solutions of the non-autonomous linear evolution equation $\frac{du}{dt}=A(t)u$ and the existence of bounded solutions of the equation
\begin{equation}\label{eq:intro1}
\frac{du}{dt}=A(t)u+f(t)
\end{equation}
for bounded continuous perturbations $f:\R_0^+\to \R$. The works of Perron on the stability of linear differential equations under sufficiently small perturbations~\cite{Pe.1930} can be considered the starting point in the study of admissibility.

More generally, the condition that for each fixed $f$, belonging to some suitable space $Y$, the equation~\eqref{eq:intro1} has a unique solution in some space $D$ is known as \emph{admissibility} of the pair of spaces where we take the perturbations, $Y$, and where we search for solutions, $D$. Note that, considering the operator $\mathcal L:D \to Y$ given by $(\mathcal L u)(t)=\frac{du}{dt}-A(t)u$, the admissibility of the pair of spaces $Y$ and $D$ can be rephrased in the context of spectral properties of $\mathcal L$.

The notion of (uniform) exponential dichotomy is a natural generalization of the notion of exponential stability that, instead of stability in the whole space, requires that there is a splitting of the phase space into two complementary directions, the stable and unstable directions, where we have, respectively, stable and unstable behaviour of solutions.
Several results have been obtained relating the existence of exponential dichotomies and admissibility. The books by Massera and Sch\"affer~\cite{Ma.Sch}, {Dalecki\v\i} and Kre\v\i n~\cite{Dal} and Coppel~\cite{Co} constitute an important source of early contributions to the theory of admissibility.

Despite its importance in the theory, the notion of (uniform) exponential dichotomy is sometimes too restrictive and therefore it is important to look for more general hyperbolic behavior. We can generalize the concept of nonuniform dichotomy by allowing some loss of hyperbolicity along the trajectories, leading to notions similar to Pesin's nonuniform hyperbolicity~\cite{Pe.1976,Pe.1977-1,Pe.1977-2}. In another direction, we can assume that the asymptotic behavior is not exponential. This reasoning leads to uniform dichotomies with asymptotic behavior given by general growth rates, a notion considered in the work of Naulin and Pinto in \cite{Na.Pi.1995,Pi.1994}. We refer the recent book~\cite{B.D.V.AH.2018} for a recent account of the relation between admissibility and hyperbolicity, including the case of nonuniform exponential dichotomies, a type of dichotomic behavior where some exponential loss of hyperbolicity along the trajectories is allowed (see for example the work of Barreira and Valls~\cite{Ba.Va.2008-1} and papers~\cite{Lu.Me.2014,Pr.Pr.Cr.2012,Sa.Ba.Sa.2013}). Admissibility was used to establish the existence of nonuniform discrete exponential dichotomies and, more generally, of exponential dichotomies with respect to a sequence of norms in the interesting paper~\cite{BDV.admis} by Barreira, Dragi\v cevi\'c and Valls.

Going further on the direction of more general dichotomic behavior, we can consider nonuniform dichotomies that do not necessarily have exponential growth. Among the long list of works, several results were obtained in \cite{Ba.Me.Po,Ba.Ch.Va.2013,Ba.Va.2008-2,Be.Si.,Be.Si.2013,Ch} for general nonuniform behavior and in \cite{Ba.Va.2009,Be.Si.2009,Be.Si.2012} for nonuniform polynomial behavior. In~\cite{D.MN.2019}, Dragicevic used the notion of admissibility to establish the existence of nonuniform polynomial dichotomies in the discrete time setting. Our work was motivated by~\cite{D.MN.2019,BDV.admis}.

In this work we consider the discrete time case. Our main objective is to generalize the results on existence of nonuniform exponential dichotomies~\cite{BDV.admis} and of nonuniform polynomial dichotomies~\cite{D.MN.2019}, based on admissibility of suitable pairs of spaces, to the more general case of nonuniform $\mu$-dichotomy. More precisely, we start by defining the notion of nonuniform $\mu$-dichotomy with respect to a sequence of norms, obtain results for these concept of dichotomy and use those results to address the case of $\mu$-dichotomies and strong $\mu$-dichotomies (a path already followed in~\cite{D.MN.2019} in the case of polynomial dichotomies). To the best of our knowledge the concept of nonuniform $\mu$-dichotomy with respect to a sequence of norms was introduced here for the first time, even though it is a natural generalization of the corresponding exponential and polynomial concepts.

We also want to highlight three main aspects of our work. In the first place, we work directly with discrete growth rates: we assume the growth rates are given by a sequence $(\rho_n)_{n \in \N}$ instead of being given by $(g(n))_{n \in \N}$, where the function $g:\R_0^+ \to \R_0^+$ is usually assumed to be increasing and differentiable. This allows our assumptions to be on properties of the sequence $(\rho_n)_{n \in \N}$ rather than properties of the function $g$. This option requires a technical lemma, relying on a result concerning the smooth global approximation of a Lipschitz function by differentiable functions, that will be used to obtain some important estimates in the proofs of our main results. Another aspect we want to refer is that it is not straightforward to guess, from the exponential and polynomial cases, how to define the operator whose invertibility relates to the existence of $\mu$-dichotomies (see~\eqref{eq:def-TmuZ}). We also note that we obtain, as corollaries of our theorems, not only the already studied exponential and polynomial cases but also the case of logarithmic growth rates. Finally, we obtain robustness results as a consequence of our characterization of $\mu$-dichotomies, and recover some existent results in the literature as particular cases of our results.

\section{Generalized dichotomies}

We say that a sequence $(\mu_m)_{m \in \N}$ is a \emph{discrete growth rate} if it is strictly increasing and $\lim_{n \to +\infty} \mu_m=+\infty$.

Let $(X, \|\cdot\|)$ be a Banach space and $(A_n)_{n \in\N}$ be a sequence of bounded linear operators acting on $X$. Define $\Delta=\{(m,n)\in \N \times \N: m\ge n\}$ and denote the \emph{discrete evolution family} associated with the sequence $(A_n)_{n \in \N}$ by $\cA=(\cA_{m,n})_{(m,n)\in\Delta}$, where
\[
\cA_{m,n}=
\begin{cases}
A_{m-1}\cdots A_n, & m>n\\
Id, & m=n
\end{cases}.
\]

Fix a growth rate $(\mu_n)_{n \in \N}$, let $(A_m)_{m\in \N}$ be a sequence of linear operators and let $(\|\cdot\|_m)_{m \in \N}$ be a sequence of norms in $X$ such that, for each fixed $m$, the norm $\|\cdot\|_m$ is equivalent to $\|\cdot\|$. We say that the sequence of linear operators $(A_m)_{m\in \N}$ (or alternatively that the linear difference equation $x_{m+1} = A_m x_m, \ m \in \N$) admits a \textit{$\mu$-dichotomy with respect to the sequence of norms} if there are projections $P_m$, $m\in\N$, such that $A_m|\ker P_m \to \ker P_{m+1}$ is invertible,
   $$P_m \cA_{m,n} = \cA_{m,n} P_n,\quad m, n\in\N,$$
and there are constants $\lambda, D > 0$ such that, for every $x \in X$ and $n,m \in \N$, we have
\begin{equation}\label{eq:dich-1}
\|\cA_{m,n} P_n x\|_m \le D \left(\mu_m/\mu_n\right)^{-\lambda} \|x\|_n, \quad \text{for} \ m \ge n
\end{equation}
and
\begin{equation}\label{eq:dich-2}
\|\cA_{m,n}Q_n x\|_m \le D \left(\mu_n/\mu_m\right)^{-\lambda} \|x\|_n, \quad \text{for} \ m \le n,
\end{equation}
where $Q_m=\Id-P_m$ is the complementary projection and, for $m \le n$, we use the notation $\cA_{m,n}=(\cA_{n,m})^{-1}: \ker P_n \to \ker P_m$.

\section{Admissibility with respect to sequences of norms}

Let $\|\cdot\|_\infty$ denote as usual the supremum norm: $\|x\|_\infty:=\sup_{m \in \N} \|x_m\|_m$. We define the space
$$Y=\{x=(x_m)\subset X: \|x\|_\infty < \infty\},$$
and, given a subspace $Z \subset X$, we define
$$Y_Z=\{x=(x_m)\subset Y: x_1 \in Z\}.$$
It is easy to see that $Y_Z$ is closed subespace of $Y$. We use the notation $Y_0=Y_{\{0\}}$.

For a given discrete growth rate $(\mu_n)_{n \in \N}$, define a sequence $(\phi^\mu_n)_{n \in \N}$ by $\phi^\mu_n=\mu_n/\mu'_n$ where $\mu'_n=\mu_{n+1}-\mu_n$ and a linear operator,
$$T^\mu_Z:(\cD(T^\mu_Z),\|\cdot\|_{T^\mu_Z}) \to Y_0,$$
by
\begin{equation}\label{eq:def-TmuZ}
(T^\mu_Z x)_1=0 \quad \text{ \and } \quad (T^\mu_Z x)_m=\phi^\mu_m(x_m-A_{m-1}x_{m-1}), \quad \ m \ge 1,
\end{equation}
where
$$\cD(T^\mu_Z):=\{x=(x_m)\subset Y_Z: T^\mu_Z x \in Y_0\},$$
and the norm $\|\cdot\|_{T^\mu_Z}$ is given by
$$\|x\|_{T^\mu_Z}:=\|x\|_\infty+\|T^\mu_Zx\|_\infty.$$

Before stating our main results, we will begin with a technical lemma that will be very useful in some estimates. The proof of the result relies on a result concerning the smooth global approximation of a Lipschitz functin by differentiable functions obtained in~\cite{Czarnecki.Rifford.TAMS.2006}.
\begin{lemma}\label{prop:ESTIMATES}
Let $(\mu_n)_{n\in\N}$ be a discrete growth rate.
If $\alpha \in \ ]0,1[ \ \cup \ ]1,+\infty[$, for each $r,s \in \N$ with $r \ge s >1$ we have
\begin{equation}\label{prop:ESTIMATES-est-le-<0ne1}
\frac{\mu_{r+1}^{1-\alpha}-\mu_s^{1-\alpha}}{1-\alpha} \le \sum_{k=s}^r \mu_k^{-\alpha}\mu_k' \le \frac{\mu_r^{1-\alpha}-\mu_{s-1}^{1-\alpha}}{1-\alpha}
\end{equation}
and for $\alpha=1$ we have
\begin{equation}\label{prop:ESTIMATES-est-=1}
\ln \frac{\mu_{r+1}^+}{\mu_{s}^+} \le \sum_{k=s}^{r} \, \mu_k'/\mu_k \le \ln \frac{\mu_r^-}{\mu_{s-1}^-}.
\end{equation}
\end{lemma}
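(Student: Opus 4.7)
The plan is to reduce the discrete estimate to a calculus one by smoothly interpolating the sequence $(\mu_n)$ and then applying the fundamental theorem of calculus together with the monotonicity of $t \mapsto t^{-\alpha}$. The Czarnecki--Rifford smooth global approximation of Lipschitz functions is used precisely to ensure that the interpolant has just the right derivative control at the integer points.

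First I would construct, via piecewise-linear interpolation of the sequence $(\mu_n)$ followed by smoothing \`a la Czarnecki--Rifford, a strictly increasing $C^\infty$ function $g:[1,\infty)\to\R$ with $g(n)=\mu_n$ for every $n\in\N$ and whose derivative $g'$ is comparable on each interval $[k,k+1]$ to the forward difference $\mu_k'$. Next, for $\alpha\neq 1$, I would integrate $g^{-\alpha}g'$ over $[k,k+1]$: the chain rule gives the antiderivative $g^{1-\alpha}/(1-\alpha)$, so
\[
\int_k^{k+1} g(t)^{-\alpha} g'(t)\,dt \;=\; \frac{\mu_{k+1}^{1-\alpha}-\mu_k^{1-\alpha}}{1-\alpha}.
\]
Since $g$ is increasing and $t\mapsto t^{-\alpha}$ is decreasing for $\alpha>0$, on $[k,k+1]$ we have $\mu_{k+1}^{-\alpha}\le g(t)^{-\alpha}\le \mu_k^{-\alpha}$. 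Multiplying by $g'(t)\ge 0$ and integrating yields the basic sandwich
\[
\mu_{k+1}^{-\alpha}\mu_k' \;\le\; \frac{\mu_{k+1}^{1-\alpha}-\mu_k^{1-\alpha}}{1-\alpha} \;\le\; \mu_k^{-\alpha}\mu_k',
\]
which, summed over $k\in\{s,\ldots,r\}$ and telescoped, immediately gives the lower bound in \eqref{prop:ESTIMATES-est-le-<0ne1}.

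For the upper bound I would apply the same sandwich to a shifted interval and invoke the smoothness of $g$ (where the Czarnecki--Rifford result is essential) to transfer the derivative estimate from $[k-1,k]$ into a bound on $\mu_k^{-\alpha}\mu_k'$ by an expression of the form $(\mu_k^{1-\alpha}-\mu_{k-1}^{1-\alpha})/(1-\alpha)$; telescoping then produces the upper bound. For the limiting case $\alpha=1$, the argument runs identically with $\log g$ in place of $g^{1-\alpha}/(1-\alpha)$, and the one-sided quantities $\mu_k^\pm$ in \eqref{prop:ESTIMATES-est-=1} encode the one-sided information coming from the smooth interpolant at integer points.

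The most delicate step I expect to be the upper bound in \eqref{prop:ESTIMATES-est-le-<0ne1}. A direct Riemann comparison for $\mu_k^{-\alpha}\mu_k'=\mu_k^{-\alpha}(\mu_{k+1}-\mu_k)$ naturally produces an integral over $[\mu_k,\mu_{k+1}]$ rather than over an interval ending at $\mu_k$, and this is precisely where one cannot get away with the naive piecewise-linear interpolant (whose left-derivative at $k$ is $\mu_{k-1}'$, not $\mu_k'$). The whole point of invoking a smooth global approximation of the Lipschitz interpolant is to have the freedom to choose $g$ so that $g'$ on a neighbourhood to the left of $k$ controls $\mu_k'$, making the shifted-sandwich estimate available.
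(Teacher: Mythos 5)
Your lower-bound argument is correct and is actually more elementary than the paper's: the pointwise sandwich
\[
\mu_{k+1}^{-\alpha}\mu_k' \;\le\; \int_{\mu_k}^{\mu_{k+1}} t^{-\alpha}\,dt \;=\; \frac{\mu_{k+1}^{1-\alpha}-\mu_k^{1-\alpha}}{1-\alpha} \;\le\; \mu_k^{-\alpha}\mu_k'
\]
needs no interpolant at all, and summing its right-hand half over $k=s,\dots,r$ telescopes directly to the left inequality in \eqref{prop:ESTIMATES-est-le-<0ne1} (the paper instead runs the full Czarnecki--Rifford approximation machinery, with $\eps$-error terms, for both directions).

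The genuine gap is in the upper bound, exactly where you flagged delicacy, and it cannot be closed by the device you propose. The telescoped target $\sum_{k=s}^{r}(\mu_k^{1-\alpha}-\mu_{k-1}^{1-\alpha})/(1-\alpha)$ forces you to dominate $\mu_k^{-\alpha}\mu_k'=\mu_k^{-\alpha}(\mu_{k+1}-\mu_k)$ by $\int_{\mu_{k-1}}^{\mu_k}t^{-\alpha}\,dt$, i.e.\ by a quantity governed by the \emph{previous} increment $\mu_{k-1}'$. No smooth interpolant gives you the claimed freedom: any $g$ with $g(n)=\mu_n$ satisfies $\int_{k-1}^{k}g'(t)\,dt=\mu_{k-1}'$, so $g'$ on a left neighbourhood of $k$ cannot be made to ``control $\mu_k'$'' --- the Czarnecki--Rifford theorem only yields uniform closeness to the piecewise-linear interpolant and its generalized derivative; it does not let you move mass of $g'$ across integer points. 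In fact the inequality you are trying to prove fails for a general discrete growth rate: take $\mu_1=1$, $\mu_2=2$, $\mu_3=100$, $\alpha=1/2$, $s=r=2$; then $\mu_2^{-1/2}\mu_2'=98/\sqrt{2}\approx 69.3$, while $(\mu_2^{1/2}-\mu_1^{1/2})/(1/2)=2(\sqrt{2}-1)\approx 0.83$. Some regularity of the increments (e.g.\ $\mu_k'\le C\mu_{k-1}'$) is needed for the right-hand inequalities of \eqref{prop:ESTIMATES-est-le-<0ne1} and \eqref{prop:ESTIMATES-est-=1}; be aware that the paper's own proof buries the same unjustified step in the first line of its chain of integral estimates, so this is a defect of the statement rather than something a better write-up of your plan could repair.
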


\begin{proof}%(Proof of the lemma)
By Theorem~B in \cite{Czarnecki.Rifford.TAMS.2006}, given a locally Lipschitz function $f:\R \to \R$ there is a sequence of differentiable functions
$(f_n)_{n \in \N}$ such that
\begin{equation}\label{eq:conditions-teo-aprox}
\lim_{n \to +\infty} \|f_n-f\|_\infty \to 0 \quad \text{ and } \quad \lim_{n \to +\infty} d_H\left(\mathcal G(f'_n),\mathcal G(\partial f)\right) \to 0,
\end{equation}
where $d_H$ denotes the Hausdorff distance, $\mathcal G(g)$ denotes the graph of $g$ and finally $\partial f$ is Clarke’s generalized derivative of $f$.

Given a discrete growth rate $(\mu_n)_{n\in\N}$, we will apply the result to the locally Lipschitz function $f:\R \to \R$ given by
\[
f(t)=
\begin{cases}
\mu_1 & \text{if} \ t \le 1\\
\mu_n + (\mu_{n+1}-\mu_n)(t-n) & \text{if} \ n < t \le n+1, \ n \in \N
\end{cases}.
\]
Note that Clarke’s generalized derivative of $f$ is given by
\[
\partial f(t)=
\begin{cases}
0 & \text{if} \ t < 1\\
\mu_{n+1}-\mu_n & \text{if} \ n < t < n+1, \ n \in \N\\
I_n & \text{if} \ n \in \N
\end{cases},
\]
where $I_n$ is the closed interval with endpoints $p=\mu_{n+2}-\mu_{n+1}$ and $q=\mu_{n+1}-\mu_n$.

By the result mentioned above, given $\eps \in (0,1)$, there is a differentiable function $f_\eps$ verifying
\begin{equation}\label{eq:conditions-teo-aprox2}
\|f-f_\eps\|_\infty < \eps/2 \quad \text{ and } \quad d_H\left(\mathcal G(\partial f),\mathcal G(f'_\eps)\right)<\eps/2.
\end{equation}

Define functions $\mu_\eps^-,\mu_\eps^+: \R \to \R$ by $\mu_\eps^\pm(t)=f_\eps(t)\pm \eps/2$. It follows that, for $t\in \R\setminus \N$,
we have $|\mu_\eps^\pm(t)-f(t)|<\eps$, $\mu_\eps^-(t)<f(t)<\mu_\eps^+(t)$ and $d_H\left(\mathcal G(\partial f),\mathcal G(f'_\eps)\right)<\eps/2$.

Define
$$\mathcal C= \bigcup_{i=1}^{+\infty} [n-\eps,n+\eps].$$
Let $f':\R\setminus\N \to \R$ be the classical derivative of $f$. We have
$$|(\mu_\eps^\pm)'(t)-f'(t)|=|(\mu_\eps^\pm)'(t)-\partial f(t)|<\eps,$$
for $t \in \R\setminus \mathcal C$ and
$$|(\mu_\eps^\pm)'(t)- f'(t)| < |f'_\eps(t)- f'(t)| +\frac{\eps}{2}
< \mu_{n+1}-\mu_n+2\eps
< (\mu_\eps^\pm)'(t)+\mu_{n+1}-\mu_n+2\eps,$$
for $t \in [n-\eps,n[ \, \cup \, ]n,n+\eps]$. Thus, if $s,r \in \N$ with $1<s<r$, we have
\[
\begin{split}
\sum_{k=s}^{r} \mu_k^{-\alpha}\mu_k'
& \le \int_{s-1}^{r} \left(\mu_\eps^-(t)\right)^{-\alpha} f'(t)\dt\\
& \le \int_{s-1}^{r} \left(\mu_\eps^-(t)\right)^{-\alpha} (\mu_\eps^-)'(t)\dt + \eps(r-s+2)\\
& \phantom{\le} + 2\eps\left(\sup_{n \in \{s,\ldots,r\}}\{\mu_{n+1}-\mu_n\}+2\eps\right)(r-s+2) \\
& = \frac{\mu_\eps^-(r)^{1-\alpha}-\mu_\eps^-(s-1)^{1-\alpha}}{1-\alpha}\\
& \phantom{\le} +\eps(r-s+2)[1+ 2\sup_{n \in \{s,\ldots,r\}}\{\mu_{n+1}-\mu_n\}+4\eps] \\
& \le \frac{(f_\eps(r)-\eps/2)^{1-\alpha}-(f_\eps(s-1)-\eps/2)^{1-\alpha}}{1-\alpha}+\eps c_{r,s}.
\end{split}
\]
where
$$c_{r,s}=(r-s+2)\left[5+ 2\sup_{n \in \{s,\ldots,r\}}\{\mu_{n+1}-\mu_n\}\right].$$
Note that $f'$ is only defined on $\R\setminus \N$ but this is enough for the integral since $[s,r] \cap \N$ is a zero measure set.
Thus, for $\alpha \in \, ]0,1[ $, we have
\[
\begin{split}
\sum_{k=s}^{r} \mu_k^{-\alpha}\mu_k'
& \le \frac{f(r)^{1-\alpha}-(f(s-1)-\eps)^{1-\alpha}}{1-\alpha}+\eps c_{r,s}\\
& = \frac{\mu_{r}^{1-\alpha}-(\mu_{s-1}-\eps)^{1-\alpha}}{1-\alpha}+\eps c_{r,s}
\end{split}
\]
and, for $\alpha >1$, we have
\[
\begin{split}
\sum_{k=s}^{r} \mu_k^{-\alpha}\mu_k'
& \le \frac{(f(r)-\eps)^{1-\alpha}-f(s-1)^{1-\alpha}}{1-\alpha}+\eps c_{r,s}\\
& = \frac{(\mu_{r}-\eps)^{1-\alpha}-\mu_{s-1}^{1-\alpha}}{1-\alpha}+\eps c_{r,s}.
\end{split}
\]
From the arbitrariness of $\eps>0$, we get the result for $\alpha \in ]0,1[ \, \cup \, ]1,+\infty[$ when $s>1$.

Taking now $\alpha=1$, and using the integral to estimate the sum as before, we get, if $s>1$,
\[
\begin{split}
\sum_{k=s}^{r} \, \mu_k'/\mu_k
& \le \left[\ln \mu_\eps^-(t) \right]_{s-1}^{r}+\eps c_{r,s}
= \ln \frac{\mu_r^-}{\mu_{s-1}^-}+\eps c_{r,s}.
\end{split}
\]
Again, from the arbitrariness of $\eps>0$, we get the result.

Similarly, we have, for $\alpha \in ]0,1[ \, \cup \, ]1,+\infty[$,
\[
\begin{split}
\sum_{k=s}^{r} \mu_k^{-\alpha}\mu_k'
& \ge \int_{s}^{r+1} \left(\mu_\eps^+(t)\right)^{-\alpha} f'(t)\dt\\
& \ge \int_{s}^{r+1} \left(\mu_\eps^+(t)\right)^{-\alpha} (\mu_\eps^+)'(t)\dt - \eps(r-s+2)\\
& \phantom{\le} - 2\eps\left(\sup_{n \in \{s,\ldots,r\}}\{\mu_{n+1}-\mu_n\}+2\eps\right)(r-s+2) \\
& = \frac{\mu_\eps^+(r+1)^{1-\alpha}-\mu_\eps^+(s)^{1-\alpha}}{1-\alpha}\\
& \phantom{\le} -\eps(r-s+2)[1+ 2\sup_{n \in \{s,\ldots,r\}}\{\mu_{n+1}-\mu_n\}+4\eps] \\
& = \frac{(f_\eps(r+1)-\eps/2)^{1-\alpha}-(f_\eps(s)-\eps/2)^{1-\alpha}}{1-\alpha}-\eps c_{r+1,s+1}\\
& = \frac{(f(r+1)-\eps)^{1-\alpha}-f(s)^{1-\alpha}}{1-\alpha}-\eps c_{r+1,s+1}.
\end{split}
\]
Finally, if $\alpha=1$ and $s>1$, estimating the sum as before, we obtain
\[
\begin{split}
\sum_{k=s}^{r} \, \mu_k'/\mu_k
& \ge \left[\ln \mu_\eps^+(t) \right]_s^{r+1}+\eps c_{r+1,s+1}
= \ln \frac{\mu_{r+1}^+}{\mu_{s}^+}+\eps c_{r+1,s+1}.
\end{split}
\]
Once more, from the arbitrariness of $\eps>0$, we get the result.
\end{proof}

We have the following remark.

\begin{remark}
In Proposition~\ref{prop:ESTIMATES} we have obtained estimates only in the case $s>1$.
If $s=1$, estimates~\eqref{prop:ESTIMATES-est-le-<0ne1} and~\eqref{prop:ESTIMATES-est-=1}
allow us to obtain, for $\alpha \in ]0,1[ \, \cup \, ]1,+\infty[$,
\begin{equation}\label{remark:ESTIMATES-est-le-<0ne1-1}
\phi^\mu_1\mu_1^{1-\alpha}+\frac{\mu_{r+1}^{1-\alpha}-\mu_2^{1-\alpha}}{1-\alpha} \le\sum_{k=1}^r \mu_k^{-\alpha}\mu_k' \le \phi^\mu_1\mu_1^{1-\alpha}+\frac{\mu_r^{1-\alpha}-\mu_1^{1-\alpha}}{1-\alpha}
\end{equation}
and, when $\alpha=1$, we get
and, as before,
\begin{equation}
\phi^\mu_1+\ln \frac{\mu_{r+1}^+}{\mu_2^+} \le \sum_{k=1}^{r} \, \mu_k'/\mu_k \le \phi^\mu_1+\ln \frac{\mu_r^-}{\mu_1^-}.
\end{equation}
\end{remark}

The arguments used in Proposition 1 in~\cite{D.MN.2019} show that, also in the present context, $T^\mu_Z$ is a closed linear operator. It is easy to check that $(\cD(T^\mu_Z),\|\cdot\|_{T^\mu_Z})$ is a Banach space. Notice that $\cD(T^\mu_Z)=\{x=(x_m)\subset Y_Z: (T^\mu_Z x)_1=0\}$.

\begin{theorem}\label{teo:invertibility}
Let $\mu$ be a differentiable growth rate.
If the sequence $(A_m)_{m \in \N}$ admits a $\mu$-dichotomy with respect to a sequence of norms $(\|\cdot\|_m)_{m \in \N}$, then the operator $T^\mu_{\im Q_1}$ is invertible.
\end{theorem}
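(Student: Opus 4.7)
My plan is to establish injectivity and surjectivity of $T^\mu_{\im Q_1}$ separately. Since the dichotomy inequalities \eqref{eq:dich-1}--\eqref{eq:dich-2} remain valid for any $\lambda' \in (0,\lambda]$, I will assume without loss of generality that $\lambda \in (0,1)$; this places the exponents that appear in all subsequent sums within the range covered by Lemma~\ref{prop:ESTIMATES}.

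Injectivity is quick. If $T^\mu_{\im Q_1}x = 0$ then $x_m = A_{m-1}x_{m-1}$ for $m \ge 2$, so $x_m = \cA_{m,1}x_1$. Since $x_1 \in \im Q_1 = \ker P_1$, invariance of $\ker P$ under $\cA$ puts $x_m$ in $\ker P_m$, and inverting $\cA_{m,1}|_{\ker P_1}$ gives $x_1 = \cA_{1,m}Q_m x_m$. Applying \eqref{eq:dich-2} with $n=m$ then yields $\|x_1\|_1 \le D(\mu_m/\mu_1)^{-\lambda}\|x\|_\infty \to 0$, so $x_1 = 0$ and hence $x \equiv 0$.

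For surjectivity, given $y \in Y_0$ I form the standard projection-based candidate
\[
x_m = \sum_{k=2}^m \cA_{m,k}P_k\frac{y_k}{\phi^\mu_k} - \sum_{k=m+1}^\infty \cA_{m,k}Q_k\frac{y_k}{\phi^\mu_k},
\]
the natural bounded-solution ansatz, since forward evolution contracts on $\im P$ and backward evolution contracts on $\im Q$. Using $1/\phi^\mu_k = \mu'_k/\mu_k$ together with \eqref{eq:dich-1} and \eqref{eq:dich-2}, the stable partial sum has $\|\cdot\|_m$-norm at most $D\|y\|_\infty \mu_m^{-\lambda}\sum_{k=2}^m \mu_k^{\lambda-1}\mu'_k$, and the unstable tail at most $D\|y\|_\infty \mu_m^{\lambda}\sum_{k=m+1}^\infty \mu_k^{-\lambda-1}\mu'_k$. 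Lemma~\ref{prop:ESTIMATES} applied with $\alpha = 1-\lambda \in (0,1)$ and $\alpha = 1+\lambda \in (1,2)$, respectively, bounds each sum by a constant multiple of $\mu_m^{\pm\lambda}/\lambda$; this gives convergence of the second series and a uniform bound $\|x\|_\infty \le 2D\|y\|_\infty/\lambda$. A direct telescoping calculation, relying on $P_mA_{m-1} = A_{m-1}P_{m-1}$ and on $A_{m-1}\cA_{m-1,k}Q_k = \cA_{m,k}Q_k$ for $k \ge m$, verifies $(T^\mu_{\im Q_1}x)_m = y_m$ for $m\ge 2$, while the explicit expression $x_1 = -\sum_{k\ge 2}\cA_{1,k}Q_k y_k/\phi^\mu_k$ makes $x_1 \in \im Q_1$ automatic, placing $x$ in the required domain.

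The main technical obstacle is extracting $m$-uniform bounds from these series: naive comparison of $\mu_k^{-\alpha}\mu'_k$ with $\int s^{-\alpha}\,ds$ is sharp only from one side depending on the monotonicity of $s^{-\alpha}$, and this is exactly where Lemma~\ref{prop:ESTIMATES} does its work by providing matching estimates on both sides for the relevant ranges of $\alpha$. Once those bounds are in hand, the rest of the proof is essentially formal manipulation of the projection-split series.
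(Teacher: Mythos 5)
Your proof is correct and follows essentially the same route as the paper's: the same Green-function-type ansatz splitting $y$ along the stable and unstable projections, the same application of Lemma~\ref{prop:ESTIMATES} with $\alpha=1\mp\lambda$ after reducing to $\lambda\in(0,1)$, and the same pull-back argument via \eqref{eq:dich-2} for injectivity. The only (harmless) differences are that you start the stable sum at $k=2$ (legitimate since $y_1=0$, and it lets you avoid the $s=1$ remark) and that you spell out the telescoping verification of $T^\mu_{\im Q_1}x=y$, which the paper delegates to a reference.
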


\begin{proof}

We may assume without loss of generality that $\lambda \in (0,1)$. To prove that $T_{\im Q}$ in surjective, let
$y=(y_n)_{n \in \N} \in Y_0$ and define a sequence $x=(x_n)_{n \in \N}$ in $X$ by
\[
x_n=\sum_{k=1}^n \frac{1}{\phi^\mu_k} \cA_{n,k}P_k y_k - \sum_{k=n+1}^\infty \frac{1}{\phi^\mu_k} \cA_{n,k}Q_k y_k.
\]
Using~\eqref{eq:dich-1} and~\eqref{remark:ESTIMATES-est-le-<0ne1-1}, we have, for each $n \in \N$,
\begin{equation}\label{eq:maj-seq-x-P}
\begin{split}
\left\| \sum_{k=1}^n \frac{1}{\phi^\mu_k} \cA_{n,k}P_k y_k \right\|_n
& \le \sum_{k=1}^n \frac{1}{\phi^\mu_k} \left\|\cA_{n,k}P_k y_k \right\|_n\\
&= D \sum_{k=1}^n \frac{\mu'_k}{\mu_k} \left(\frac{\mu_n}{\mu_k}\right)^{-\lambda} \left\|y_k \right\|_k\\
& \le D \mu_n^{-\lambda} \left\|y \right\|_\infty \sum_{k=1}^n \mu_k^{-1+\lambda}\mu'_k \\
& \le D  \mu_n^{-\lambda} \left\|y \right\|_\infty \left(\mu_1^\lambda\phi_1^\mu+\mu_n^\lambda/\lambda-\mu_1^\lambda/\lambda\right)\\
&\le D(\phi_1^\mu+1/\lambda) \left\|y \right\|_\infty
\end{split}
\end{equation}
and, using~\eqref{eq:dich-2} and~\eqref{prop:ESTIMATES-est-le-<0ne1}, we have, for each $n \in \N$,
\begin{equation}\label{eq:maj-seq-x-Q}
\begin{split}
\left\| \sum_{k=n+1}^\infty \frac{1}{\phi^\mu_k} \cA_{n,k}Q_k y_k \right\|_n
& \le \sum_{k=n+1}^\infty \frac{1}{\phi^\mu_k} \left\|\cA_{n,k}Q_k y_k \right\|_n\\
& \le D \sum_{k=n+1}^\infty \frac{\mu_k'}{\mu_k} \left(\frac{\mu_k}{\mu_n}\right)^{-\lambda} \left\|y_k \right\|_k\\
& \le D \mu_n^\lambda \left\|y \right\|_\infty \sum_{k=n+1}^\infty \mu_k^{-1-\lambda}\mu_k'\\
& \le D \mu_n^{\lambda} \left\|y \right\|_\infty \mu_n^{-\lambda}/\lambda
\le D/\lambda \left\|y \right\|_\infty
\end{split}
\end{equation}
By~\eqref{eq:maj-seq-x-P} and ~\eqref{eq:maj-seq-x-Q}, we conclude that $\sup_{n \in \N} \|x_n\|_n \le D(\phi_1^\mu+2/\lambda)\left\|y \right\|_\infty$ and, since $y_1=0$, we conclude that $x_1 \in \im Q_1$. Thus $x \in \cD(T_{\im Q})$. Simmilar computations to those in the proof of Theorem 2 in~\cite{D.MN.2019} show that $T_{\im Q} x=y$. Therefore $T_{\im Q}$ is surjective.

To prove that $T_{\im Q}$ is injective, let $u=(u_n)_{n \in \N},v=(v_n)_{n \in \N} \in \cD(T_{\im Q_1})$ be such that $T_{\im Q_1}u=T_{\im Q_1}v=0$. We have that
$$\|u_1-v_1\|_1=\|\cA_{1,n}(u_n-v_n)\|_1\le D \mu_n^{-\lambda}\|u-v\|_\infty \to 0,$$
as $n \to \infty$, and we conclude that $u_1=v_1$. Thus $u_n=\cA(n,1)u_1=\cA(n,1)v_1=v_n$, for each $n \in \N$. We conclude that $u=v$ and $T_{\im Q}$ is injective.
\end{proof}

\begin{theorem}\label{teo:exist-dich}
Let $\mu$ be a discrete growth rate. Assume that there is an increasing sequence $(q_n)$ with $q_n\ge n+1$, $n \in \N$, and constants $L_1,L_2>1$ such that, for all $n \in\ \N$,
\begin{equation}\label{eq:COND-minimal-growth}
L_1 \le \frac{\mu_{q_n}}{\mu_n} \le L_2.
\end{equation}
If $T^\mu_Z$ is an invertible operator for some closed subespace $Z \subset X$ and there are constants $\lambda,M>0$ such that
\begin{equation}\label{eq:teo:exist-dich:exponential-bound}
\|\cA_{m,n}x\|_m\le M(\mu_m/\mu_n)^\lambda\|x\|_n,
\end{equation}
for all $m \ge n$ and $x \in X$, then $(A_m)_{m \in \N}$ admits a $\mu$-dichotomy with respect to the sequence of norms $(\|\cdot\|_m)_{m \in \N}$.
\end{theorem}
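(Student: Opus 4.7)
The strategy is a Perron-type converse: use the invertibility of $T^\mu_Z$ to construct projections $P_m$, $Q_m$, show that $Z$ can be identified with $\im Q_1$, and extract the dichotomy estimates~\eqref{eq:dich-1}--\eqref{eq:dich-2} from admissibility together with~\eqref{eq:COND-minimal-growth} and the a priori bound~\eqref{eq:teo:exist-dich:exponential-bound}. I would adapt the scheme of~\cite[Theorem~3]{D.MN.2019}, replacing the polynomial weights by the $\mu$-weights controlled via Lemma~\ref{prop:ESTIMATES}.

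\emph{Construction of the projection at time $1$.} Identify the candidate stable subspace at time $1$ as
\[
S := \{\xi \in X : (\cA_{m,1}\xi)_{m\in\N} \in Y\}.
\]
Injectivity of $T^\mu_Z$ forces $S \cap Z = \{0\}$, for any $\xi \in S \cap Z$ would produce a bounded $x_m = \cA_{m,1}\xi$ in $\cD(T^\mu_Z)$ with $T^\mu_Z x = 0$. For the decomposition $X = S \oplus Z$, given $\xi \in X$ I would build a suitable $y^\xi \in Y_0$ encoding $\xi$ as a concentrated inhomogeneity (for instance, $y^\xi_2 = \phi^\mu_2 A_1\xi$ and $y^\xi_m = 0$ otherwise), apply $(T^\mu_Z)^{-1}$, and read off the component $x_1(\xi) \in Z$ that has to be subtracted from $\xi$ to produce a bounded orbit. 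The map $\xi \mapsto x_1(\xi)$, suitably normalized, is the candidate $Q_1$, with $P_1 = \Id - Q_1$; continuity of $(T^\mu_Z)^{-1}$ together with the closed graph theorem yields boundedness of the projections.

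\emph{Propagation and dichotomy estimates.} Define $\im P_m := \cA_{m,1}(\im P_1)$ and transport $\im Q_1$ along the dynamics, verifying that $A_m|\ker P_m \to \ker P_{m+1}$ is invertible by an analogous admissibility argument applied to shifted indices. For a stable $\xi \in \im P_n$, boundedness of $(\cA_{m,n}\xi)_{m\ge n}$ in the sequence-of-norms sense is already part of the construction. To upgrade boundedness to the rate $(\mu_m/\mu_n)^{-\lambda}$, I would use~\eqref{eq:COND-minimal-growth}: invertibility of $T^\mu_Z$, fed with a carefully chosen $y$ supported near time $q_n$ and built from the orbit of $\xi$, controls $\|\cA_{q_n,n}\xi\|_{q_n}$ in terms of $\|\xi\|_n$; since $\mu_{q_n}/\mu_n \ge L_1 > 1$, iterating along the subsequence generated by $n \mapsto q_n$ yields geometric decay, and~\eqref{eq:teo:exist-dich:exponential-bound} fills in the intermediate times to give a continuous rate of the form $(\mu_m/\mu_n)^{-\lambda}$. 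The unstable estimate~\eqref{eq:dich-2} is derived symmetrically, running the same argument backward along the invertible restrictions $A_m|\ker P_m$.

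\emph{Main obstacle.} The principal difficulty is precisely the conversion of bounded admissibility into a quantitative dichotomy rate: $(T^\mu_Z)^{-1}$ delivers only bounded preimages, so a positive exponent $\lambda$ has to be manufactured by iterating along the subsequence $(q_n)$, and this is the sole purpose of hypothesis~\eqref{eq:COND-minimal-growth}, which keeps $\mu_{q_n}/\mu_n$ both bounded above and bounded away from $1$. The extra technical cost compared with~\cite{D.MN.2019} is that the sums $\sum \mu_k^{-\alpha}\mu_k'$ appearing when one plugs the test sequences $y^\xi$ into the abstract formula are no longer geometric; their control via Lemma~\ref{prop:ESTIMATES} is exactly what makes the required series absolutely convergent and produces the constants in the dichotomy.
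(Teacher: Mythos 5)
Your overall strategy is the one the paper follows (the scheme of Dragi\v cevi\'c's Theorem~3 adapted to $\mu$-weights): concentrated test sequences fed into $(T^\mu_Z)^{-1}$ give the splitting, condition~\eqref{eq:COND-minimal-growth} manufactures a definite contraction factor along the subsequence $n \mapsto q_n$, and iteration of that factor together with~\eqref{eq:teo:exist-dich:exponential-bound} converts it into a rate $(\mu_m/\mu_n)^{-a}$, with Lemma~\ref{prop:ESTIMATES} controlling the sums $\sum \mu_k^{-\alpha}\mu_k'$. (Note that the exponent produced is a new $a$ depending on $L_1,L_2$ and $\|(T^\mu_{\im Q_1})^{-1}\|$, not the $\lambda$ of the hypothesis.) Two steps in your sketch, however, are either wrong as stated or missing.

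First, you define the stable space at time $m$ as $\cA_{m,1}(\im P_1)$. This forward image need not complement $Z_m$: $\cA_{m,1}$ is not assumed injective or surjective, so $\cA_{m,1}(\im P_1)+Z_m$ can be a proper subspace of $X$ (take $A_1=0$). The paper instead defines $X_n=\{x\in X:\sup_{m\ge n}\|\cA_{m,n}x\|<\infty\}$ intrinsically and re-runs the splitting argument at \emph{every} time $n$, using test pairs $u^k,w^k$ supported at positions $k,k+1$; only the inclusion $A_nX_n\subset X_{n+1}$ is transported. Second, your estimates apply to $x$ in the stable (resp.\ unstable) subspace, whereas \eqref{eq:dich-1}--\eqref{eq:dich-2} must hold for arbitrary $x$; this requires $\sup_n\|P_n\|<\infty$, which does not follow from the closed graph theorem applied at each fixed time. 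In the paper this is a separate lemma: the decay on $X_n$ and the expansion on $Z_n$ together bound $\inf\{\|u+v\|_n: u\in X_n,\ v\in Z_n,\ \|u\|=\|v\|=1\}$ away from zero uniformly in $n$, and the angle estimate of Minh--R\"abiger--Schnaubelt then yields the uniform bound on the projections. Both repairs fit inside your framework, but neither is automatic.
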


\begin{proof}
For each $n \in \N$, consider the following subespaces of $X$:
$$X_n=\left\{x \in X: \sup_{m \ge n} \|\cA_{m,n} x\|<\infty\right\} \quad \text{ and } \quad Z_n=\cA_{n,1}Z.$$

It is easy to deduce that, for each $n \in \N$, we have:
$$A_nX_n \subset X_{n+1} \quad \text{ and } \quad A_n Z_n = Z_{n+1}.$$

\begin{lemma}
We have $X=X_n\oplus Z_n$, for each $n \in \N$.
\end{lemma}

\begin{proof}[Proof of the lemma]
For $v \in X$, we define the sequences $u^1=(v,0,0,\ldots)$ and $w^1=(0,-\phi_2^\mu A_1v,0,\ldots)$. Note that
$(T^\mu_Z u^1)_1=0=w^1_1$, $(T^\mu_Zu^1)_2=\phi_2^\mu(u^1_2-A_1u^1_1)=-\phi_2^\mu A_1v=w^1_2$ and $(T^\mu_Zu^1)_m=0=w^1_m$ for $m \ge 3$. Thus $T^\mu_Zu^1=w^1$.
Since $T^\mu_Z$ is surjective and $w^1 \in Y_0$, we conclude that there is $z^1 \in Y_Z$ such that $T^\mu_Z z^1=w^1$ and, in particular, $z^1_1\in Z_1$.
We have,
\[
\begin{split}
u^1_m-z^1_m
& =\frac{1}{\phi^\mu_m}w^1_m-A_{m-1}u^1_{m-1}-\frac{1}{\phi^\mu_m}w^1_m+A_{m-1}z^1_{m-1}\\
& =A_{m-1} (u^1_{m-1}-z^1_{m-1})\\
& = \cdots \\
& = \cA_{m,1}(v-z^1_1),
\end{split}
\]
for $m \in \N$. Since $u^1-z^1 \in Y$, we have $\sup_{m \ge 1}\|\cA_{m,1}(v-z^1_1)\|<+\infty$ and thus $v-z^1_1\in X_1$.
Therefore $v=v-z^1_1+z^1_1 \in X_1+Z_1$.

Letting now $k>1$, we define the sequences $u^k=(u^k_i)_{i \in \N}$ and $w^k=(w^k_i)_{i \in \N}$ by
\[
u^k_i
=
\begin{cases}
v & \quad \text{if} \, i=k\\
0 & \quad \text{if} \, i\ne k
\end{cases}
\quad\quad
\text{and}
\quad\quad
w^k_i
=
\begin{cases}
\phi^\mu_k v & \quad \text{if} \, i=k\\
-\phi^\mu_{k+1} A_k v & \quad \text{if} \, i=k+1\\
0 & \quad \text{if} \, i\ne k, k+1
\end{cases}.
\]
It is easy to check that $T^\mu_Zu^k=w^k$. Since $T^\mu_Z$ is surjective and $w^k \in Y_0$, we conclude that there is $z^k \in Y_Z$ such that $T^\mu_Z z^k=w^k$ and, in particular, $z^k_k\in Z_k$.
We have,
\[
\begin{split}
u^k_m-z^k_m
& =\frac{1}{\phi^\mu_m}w^k_m+A_{m-1}u^k_{m-1}-\frac{1}{\phi^\mu_m}w^k_m-A_{m-1}z^k_{m-1}\\
& =A_{m-1} (u^k_{m-1}-z^k_{m-1})\\
& = \cdots \\
& = \cA_{m,k}(v-z^k_k),
\end{split}
\]
for $m \in \N$. Since $u^k-z^k \in Y$, we have $\sup_{m \ge 1}\|\cA_{m,k}(v-z^k_k)\|<+\infty$ and thus $v-z^k_k\in X_k$.
Therefore $v=v-z^k_k+z^k_k \in X_k+Z_k$. We have concluded that $v \in X_n+Z_n$ for each $n \in \N$.

Let now $m \in \N$ and $v \in X_m \cap Z_m$. Since $v \in Z_m=\cA_{m,1}Z_1$, there is $z \in Z_1=Z$ such that $v=\cA_{m,1}z$.
Define a sequence $x=(x_n)_{n \in \N}$ by $x_n=\cA_{n,1}z$, $n \in \N$. Since $v \in X_m$ and $z \in Z$, it is easy to conclude that $x \in Y_Z$.
Since $(T^\mu_Zx)_1=0$ and $(T^\mu_Zx)_k=\phi^\mu_k(x_k-A_{k-1}x_{k-1})=\phi^\mu_k(\cA_{k,1}z-A_{k-1}\cA_{k-1,1}z)=0$, we conclude that $T^\mu_Zx=0$. By the invertibility of
$T^\mu_Z$, we have $x=0$ and consequently $v=0$. Thus $X_m \cap Z_m = \{0\}$.

We conclude that $x=X_m \oplus Z_m$, for any $m \in \N$.
\end{proof}

The same arguments used in the proof of Lemma 3 in~\cite{D.MN.2019} allow us to conclude that, for each $n \in \N$, the linear map $A_n|_{Z_n}: Z_n \to Z_{n+1}$ is invertible.

%Note that we can assume, without loss of generality, that $L_1<L_2$ in \eqref{eq:COND-minimal-growth}. From now on, this will be assumed \textbf{(\'e mesmo preciso?)}.

\begin{lemma}\label{lemma:estimate-cA-X}
There are $D_1,a>0$ such that $\|\cA_{m,n}x\|_m \le D_1(\mu_m/\mu_n)^{-a}\|x\|_n$,
for $m\ge n$ and $x\in X_n$.
\end{lemma}

\begin{proof}
Consider the sequences $y=(y_n)_{n \in \N}$ and $x=(x_n)_{n \in \N}$ given by
\[
y_k=
\begin{cases}
\dfrac{\cA(k,n)x}{\|\cA(k,n)x\|_k}, & \quad \text{if} \ n+1\le k\le m\\
0, & \quad \text{otherwise}
\end{cases}
\]
and
\[
x_k=
\begin{cases}
0, & \quad \text{if} \ 1\le k\le n\\[2mm]
\displaystyle \sum_{j=n+1}^k \frac{\cA(k,n)x}{\phi^\mu_j\|\cA(j,n)x\|_j}, & \quad \text{if} \ n+1\le k\le m\\[2mm]
\displaystyle \sum_{j=n+1}^m \frac{\cA(k,n)x}{\phi^\mu_j\|\cA(j,n)x\|_j}, & \quad \text{if} \ k> m
\end{cases}.
\]
It is easy to check that $y \in Y_0$, $x \in Y_{\im Q_1}$ and $T^\mu_{\im Q_1}x=y$. Using this, we obtain
\[
\begin{split}
\|x\|_\infty \le \|x\|_{T^\mu_{\im Q_1}}
&=\|(T^\mu_{\im Q_1})^{-1}y\|_{T^\mu_{\im Q_1}}\le\|(T^\mu_{\im Q_1})^{-1}\|\|y\|_\infty\\
&\le\|(T^\mu_{\im Q_1})^{-1}\|\le\|(T^\mu_{\im Q_1})^{-1}\|,
\end{split}
\]
and thus
\begin{equation}\label{eq:TmuQ1}
\|(T^\mu_{\im Q_1})^{-1}\| \ge \|x\|_\infty \ge \|x_m\|_m=\|\cA_{m,n}x\|_m \sum_{j=n+1}^m \frac{1}{\phi^\mu_j\|\cA_{j,n}x\|_j}.
\end{equation}
Using~\eqref{eq:teo:exist-dich:exponential-bound} and~\eqref{eq:COND-minimal-growth} we have
$$\|\cA_{j,n}x\|_j\le M(\mu_j/\mu_n)^\lambda\|x\|_n.$$
Thus, by~\eqref{eq:TmuQ1},
\begin{equation}\label{eq:bound-by-norm-of-inverse}
\|(T^\mu_{\im Q_1})^{-1}\| \ge \|\cA_{m,n}x\|_m \sum_{j=n+1}^m \frac{1}{\phi^\mu_jM(\mu_j/\mu_n)^\lambda\|x\|_n}
\ge \frac{\|\cA_{m,n}x\|_m}{M\|x\|_n}\sum_{j=n+1}^m \frac{\mu_n^\lambda}{\phi^\mu_j\mu_j^\lambda}.
\end{equation}

Therefore, for $m \ge q_{n+1}-1$,
\[
\begin{split}
\|(T^\mu_{\im Q_1})^{-1}\|
& \ge \frac{\|\cA_{m,n}x\|_m}{M\|x\|_n}\sum_{j=n+1}^{q_{n+1}-1} \frac{\mu_n^\lambda}{\phi^\mu_j\mu_j^\lambda}\\
& \ge \frac{\|\cA_{m,n}x\|_m}{M\|x\|_n}\, \mu_n^\lambda\sum_{j=n+1}^{q_{n+1}-1} \frac{1}{\phi^\mu_j\mu_j^\lambda}\\
& \ge \frac{1}{\lambda M} \left(\frac{\mu_n}{\mu_{n+1}}\right)^\lambda\left(1-\left(\frac{\mu_{n+1}}{\mu_{q_{n+1}}}\right)^\lambda\right) \frac{\|\cA_{m,n}x\|_m}{\|x\|_n}\\
& \ge \frac{1}{\lambda M} \left(\frac{\mu_n}{\mu_{q_n}}\right)^\lambda\left(1-\left(\frac{\mu_{n+1}}{\mu_{q_{n+1}}}\right)^\lambda\right) \frac{\|\cA_{m,n}x\|_m}{\|x\|_n}\\
& \ge \frac{1-L_1^{-\lambda}}{\lambda ML_2^\lambda} \frac{\|\cA_{m,n}x\|_m}{\|x\|_n}.
\end{split}
\]
We conclude that, for $m \ge q_{n+1}-1$, we have
\[
 \|\cA_{m,n}x\|_m
\le \lambda M L_2^\lambda L_1^\lambda(L_1^\lambda-1)^{-1} \|(T^\mu_{\im Q_1})^{-1}\| \, \|x\|_n.
\]

Using~\eqref{eq:teo:exist-dich:exponential-bound}, we have, for $n \le m < q_{n+1}-1$,
\[
\begin{split}
\|\cA_{m,n}x\|_m
& \le M(\mu_m/\mu_n)^\lambda\|x\|_n\le M(\mu_{q_{n+1}-1}/\mu_n)^\lambda\|x\|_n \\
& \le M(\mu_{q_{n+1}-1}/\mu_{n+1})^\lambda (\mu_{n+1}/\mu_{n})^\lambda\|x\|_n\\
& \le M(\mu_{q_{n+1}}/\mu_{n+1})^\lambda (\mu_{q_n}/\mu_{n})^\lambda\|x\|_n
\le ML_2^{2\lambda}\|x\|_n.
\end{split}
\]

We conclude that, for $m \ge n$,
\begin{equation}\label{eq:bound-cA-M2}
\|\cA_{m,n}x\|_m \le M_2\|x\|_n,
\end{equation}
where
\[
M_2=M L_2^\lambda \max\left\{\lambda L_1^\lambda(L_1^\lambda-1)^{-1} \|(T^\mu_{\im Q_1})^{-1}\|, \, L_2^\lambda \right\}.
\]

Taking into account~\eqref{prop:ESTIMATES-est-=1}, for $m > n$ we have
\begin{equation}\label{eq:bound-importamt-sum}
\begin{split}
\sum_{j=n+1}^m \frac{1}{\phi^\mu_j} \ge \ln \frac{\mu_{m+1}}{\mu_{n+1}}.
\end{split}
\end{equation}

Since $\mu$ is a growth rate, by~\eqref{eq:TmuQ1},~\eqref{eq:bound-cA-M2} and~\eqref{eq:bound-importamt-sum}, we have
\[
\begin{split}
\|(T^\mu_{\im Q_1})^{-1}\|
& \ge \|\cA_{m,n}x\|_{m} \sum_{j=n+1}^{m} \frac{1}{\phi^\mu_j\|\cA_{j,n}x\|_j}
\ge \frac{\|\cA_{m,n}x\|_{m}}{M_2 \|x\|_n} \ln \frac{\mu_{m+1}}{\mu_{n+1}}.\\
& \ge \frac{\|\cA_{m,n}x\|_{m}}{M_2 \|x\|_n} \ln \frac{\mu_{m}}{L_2\mu_{n}}.\\
\end{split}
\]
Taking into account that
$$ M_2 \|(T^\mu_{\im Q_1})^{-1}\| \left(\ln \frac{\mu_m}{L_2 \mu_n}\right)^{-1} \le \e^{-1} \quad
\Leftrightarrow \quad \mu_m  \ge N_0 \mu_n,$$
where
$$N_0=L_2\exp\{\e M_2\|(T^\mu_{\im Q_1})^{-1}\|\},$$
we conclude that
\begin{equation}\label{eq:crucial}
\|\cA_{m,n}x\|_m \le M_2 \|(T^\mu_{\im Q_1})^{-1}\| \left(\ln \frac{\mu_{m+1}}{\mu_{n+1}}\right)^{-1} \|x\|_n
\le \e^{-1} \|x\|_n,
\end{equation}
for $m,n \in \N$ such that $\mu_m \ge N_0\mu_n$.

Define $\gamma(n,0)=n$ and $\gamma(n,k)=q_{\gamma(n,k-1)}$ for $k\in \N$.
According to~\eqref{eq:COND-minimal-growth}, we have, for all $k,n \in \N$
\[
\begin{split}
\frac{\mu_{\gamma(n,k)}}{\mu_n} = \frac{\mu_{\gamma(n,k)}}{\mu_{\gamma(n,k-1)}}
\frac{\mu_{\gamma(n,k-1)}}{\mu_{\gamma(n,k-2)}} \cdots \frac{\mu_{\gamma(n,2)}}{\mu_{\gamma(n,1)}}
 \ge \frac{\mu_{q_{\gamma(n,k-1)}}}{\mu_{\gamma(n,k-1)}}
  \frac{\mu_{q_{\gamma(n,k-2)}}}{\mu_{\gamma(n,k-2)}}\cdots \frac{\mu_{q_{\gamma(n,1)}}}{\mu_{\gamma(n,1)}}
  \ge L_1^k.
\end{split}
\]
Thus, there is $K_0 \in \N$ sufficiently large such that, for all $n \in \N$,
\begin{equation}\label{eq:conversion-m.n-mum.mun}
\frac{\mu_{\gamma(n,K_0)}}{\mu_n} \ge L_1^{K_0} \ge N_0.
\end{equation}
We conclude that~\eqref{eq:crucial} holds for all $m,n \in \N$, $m \ge n$, such that $\mu_m \ge\mu_{\gamma(n,K_0)}$.

Assume now that $m,n \in \N$, $m \ge n$ with no other restrictions and consider the largest $\ell \in \N \cup \{0\}$ such that $\mu_m \ge \mu_{\gamma(n,\ell K_0)}$. Since, for $p=0,\ldots,\ell-1$, we have
\[
\mu_{\gamma(n,(\ell-p) K_0)}\ge L_1^{K_0} \mu_{\gamma(n,(\ell-p-1) K_0)}
\ge N_0 \mu_{\gamma(n,(\ell-p-1) K_0)}
\]
we conclude, using~\eqref{eq:crucial}, that
\begin{equation}\label{eq:aux-bound-cAmn}
\begin{split}
& \|\cA_{m,n} x\|_m\\
& = \|\cA_{m,\gamma(n,\ell K_0)}\cA_{\gamma(n,\ell K_0),n} x\|_m\\
& = \|\cA_{m,\gamma(n,\ell K_0)}\cA_{\gamma(n,\ell K_0),\gamma(n,(\ell-1) K_0)}
\cdots \cA_{\gamma(n,K_0),n}x\|_m\\
& \le M\left(\mu_m/\mu_{\gamma(n,\ell K_0)}\right)^\lambda \|\cA_{\gamma(n,\ell K_0),\gamma(n,(\ell-1) K_0)}
\cdots \cA_{\gamma(n,K_0),n}x\|_{\gamma(n,\ell K_0)}\\
& \le ML_2^\lambda \e^{-\ell} \|x\|_n\\
\end{split}
\end{equation}
Noting that
\[
\dfrac{\mu_m}{\mu_n} \le \dfrac{\mu_{\gamma(n,(\ell+1) K_0)}}{\mu_n} = \dfrac{\mu_{\gamma(n,(\ell+1) K_0)}}{\mu_{\gamma(n,(\ell+1) K_0-1)}}
\dfrac{\mu_{\gamma(n,(\ell+1) K_0-1)}}{\mu_{\gamma(n,(\ell+1) K_0-2)}}\cdots
\dfrac{\mu_{\gamma(n,1)}}{\mu_n} \le L_2^{(\ell+1)K_0}
\]
we get
\begin{equation}\label{eq:aux-bound-cAmn-2}
\ell > \dfrac{\log \left(\mu_m/\mu_n\right)}{K_0\log L_2}-1
\quad \Leftrightarrow \quad
\e^{-\ell} < \e \, \left(\mu_m/\mu_n\right)^{-1/(K_0 \log L_2)}.
\end{equation}
It follows from~\eqref{eq:aux-bound-cAmn} and~\eqref{eq:aux-bound-cAmn-2} that, for $m \ge n$,
\[
\begin{split}
\|\cA_{m,n} x\|_m
& \le M L_2^\lambda \e \, \left(\mu_m/\mu_n\right)^{-1/(K_0 \log L_2)} \|x\|_n
\end{split}
\]
and we obtain the estimate $\|\cA_{m,n}x\|_m \le D_1 \left(\mu_m/\mu_n\right)^{-a} \, \|x\|_n$ with $D_1=\e M L_2^\lambda$ and $a=1/(K_0 \log L_2)$. The lemma is proved.
\end{proof}

\begin{lemma}\label{lemma:estimate-cA-Z}
There are $D_2,b>0$ such that $\|\cA_{m,n}x\|_m \ge D_2(\mu_n/\mu_m)^{-b}\|x\|_n$,
for $m\le n$ and $x\in Z_n$.
\end{lemma}

\begin{proof}
Let $z \in Z \setminus\{0\}$ and consider the sequences $y=(y_n)_{n \in \N}$ and $x=(x_n)_{n \in \N}$ given by
\[
y_k=
\begin{cases}
0, & \quad \text{if} \ k=1\\
-\dfrac{\cA_{k,1}z}{\|\cA_{k,1}z\|_k}, & \quad \text{if} \ 2 \le k\le m\\
0, & \quad \text{otherwise}
\end{cases}
\]
and
\[
x_k=
\begin{cases}
\displaystyle \sum_{j=k+1}^n \frac{\cA_{k,1}z}{\phi^\mu_j\|\cA_{j,1}z\|_j}, & \quad \text{if} \ 1\le k\le n-1\\[2mm]
0, & \quad \text{if} \ k \ge n
\end{cases}.
\]
We have $y \in Y_0$ and $x \in Y_{\im Q_1}$ and it is easy to check that $T^\mu_{\im Q_1}x=y$ and, like in~\eqref{eq:bound-by-norm-of-inverse}, we
get $\|x\|_\infty\le\|T_{\im Q_1}^{-1}\|$.
For each $1\le k \le n-1$, we have
\[
\|(T^\mu_{\im Q_1})^{-1}\| \ge \|\cA_{k,1} z \|_k \sum_{j=k+1}^n \frac{1}{\phi^\mu_j\|\cA_{j,1}z\|_j}
\]
and, letting $n \to +\infty$, we obtain, for $k \in \N$ and $z \in Z\setminus\{0\}$,
\[
\|(T^\mu_{\im Q_1})^{-1}\| \ge \|\cA_{k,1} z \|_k \sum_{j=k+1}^\infty \frac{1}{\phi^\mu_j\|\cA_{j,1}z\|_j}.
\]

We have, by~\eqref{prop:ESTIMATES-est-le-<0ne1}, ~\eqref{eq:COND-minimal-growth} and~\eqref{eq:teo:exist-dich:exponential-bound},
\[
\begin{split}
\frac{1}{\|\cA_{n,1} z \|_n}
& \ge \frac{1}{\|(T^\mu_{\im Q_1})^{-1}\|} \sum_{j=n+1}^\infty \frac{1}{\phi^\mu_j\|\cA_{j,1}z\|_j}\\
& \ge \frac{1}{\|(T^\mu_{\im Q_1})^{-1}\|} \sum_{j=m+1}^{q_m} \frac{1}{\phi^\mu_j\|\cA_{j,m}\cA_{m,1}z\|_j}\\
& \ge \frac{1}{\|(T^\mu_{\im Q_1})^{-1}\|} \sum_{j=m+1}^{q_m} \frac{1}{\phi^\mu_j M (\mu_j/\mu_m)^\lambda \|\cA_{m,1}z\|_m}\\
& \ge \frac{1}{M\|(T^\mu_{\im Q_1})^{-1}\| \, \|\cA_{m,1}z\|_m} \sum_{j=m+1}^{q_m} \frac{1}{\phi^\mu_j (\mu_j/\mu_m)^\lambda}\\
& = \frac{\mu_m^\lambda}{M\|(T^\mu_{\im Q_1})^{-1}\| \, \|\cA_{m,1}z\|_m} \sum_{j=m+1}^{q_m} \mu_j' \,\mu_j^{-1-\lambda}\\
& \ge \frac{1}{M\lambda\|(T^\mu_{\im Q_1})^{-1}\| \, \|\cA_{m,1}z\|_m} \left( 1-\frac{\mu_m^\lambda}{\mu_{q_m}^\lambda}\right)\\
& \ge \frac{1-L_1^{-\lambda}}{M\lambda\|(T^\mu_{\im Q_1})^{-1}\| \, \|\cA_{m,1}z\|_m}.
\end{split}
\]
and thus, for $m \ge n$, we get
\begin{equation}\label{eq:cA_m,nz-2}
\|\cA_{m,1}z\|_m \ge L \|\cA_{n,1} z \|_n,
\end{equation}
where
$$L=\frac{1-L_1^{-\lambda}}{M\lambda\|(T^\mu_{\im Q_1})^{-1}\|}.$$

We have, using~\eqref{prop:ESTIMATES-est-=1} and~\eqref{eq:cA_m,nz-2},
\[
\begin{split}
\frac{1}{\|\cA_{n,1} z \|_n}
& \ge \frac{1}{\|(T^\mu_{\im Q_1})^{-1}\|} \sum_{j=n+1}^\infty \frac{1}{\phi^\mu_j\|\cA_{j,1}z\|_j}\\
& \ge \frac{1}{\|(T^\mu_{\im Q_1})^{-1}\|} \sum_{j=n+1}^m \frac{1}{\phi^\mu_j\|\cA_{j,1}z\|_j}\\
& \ge \frac{L}{\|(T^\mu_{\im Q_1})^{-1}\|\|\cA_{m,1}z\|_m} \sum_{j=n+1}^m \frac{1}{\phi^\mu_j}\\
& \ge \frac{L}{\|(T^\mu_{\im Q_1})^{-1}\|\|\cA_{m,1}z\|_m} \ln \frac{\mu_{m+1}}{\mu_{n+1}}.
\end{split}
\]
Since
\[
\frac{L}{\|(T^\mu_{\im Q_1})^{-1}\|} \ln \frac{\mu_{m+1}}{\mu_{n+1}} \ge \e  \quad\quad
\Leftrightarrow \quad\quad \mu_{m+1}  \ge N_0 \mu_{n+1},
\]
where
$$N_0=\exp\{\e L^{-1} \|(T^\mu_{\im Q_1})^{-1}\|\},$$
we have
\begin{equation} \label{eq:crucial2}
\|\cA_{m,1} z \|_m \ge \e \|\cA_{n,1}z\|_n,
\end{equation}
for $m,n \in \N$ such that $\mu_m \ge N_0\mu_n$.

We let again $\gamma(n,0)=n$ and $\gamma(n,k)=q_{\gamma(n,k-1)}$ for $k\in \N$.
Proceeding like in the proof of ... we conclude that there is $K_0$ such that~\eqref{eq:crucial2} holds for all $m,n \in \N$, $m \ge n$, such that $\mu_m \ge \mu_{\gamma(n,K_0)}$.

Assume now that $m,n \in \N$, $m \ge n$ with no other restrictions and consider the largest $\ell \in \N \cup \{0\}$ such that $\mu_m \ge \mu_{\gamma(1,\ell K_0)}$. Since, for $p=0,\ldots,\ell-1$, we have
\[
\mu_{\gamma(1,(\ell-p) K_0)}\ge L_1^{K_0} \mu_{\gamma(1,(\ell-p-1) K_0)}
\ge N_0 \mu_{\gamma(1,(\ell-p-1) K_0)}
\]
we conclude, using~\eqref{eq:crucial}, that
\begin{equation}\label{eq:aux-bound-cAmn-3}
\begin{split}
& \|\cA_{m,1} x\|_m\\
& \ge L \|\cA_{\gamma(n,\ell K_0),1} x\|_{\gamma(n,\ell K_0)}\\
& = L \|\cA_{\gamma(n,\ell K_0),\gamma(n,(\ell-1) K_0)}\cA_{\gamma(n,(\ell-1) K_0),\gamma(n,(\ell-2) K_0)}
\cdots \cA_{\gamma(n,K_0),n}\cA_{n,1}x\|_{\gamma(n,\ell K_0)}\\
& \ge L\e \|\cA_{\gamma(n,(\ell-1) K_0),\gamma(n,(\ell-2) K_0)}
\cdots \cA_{n,1}x\|_{\gamma(n,\ell K_0)}\\
& \ge L \e^\ell \|\cA_{n,1} x\|_n,
\end{split}
\end{equation}
for all $m,n \in \N$, $m \ge n$, such that $\mu_m \ge \mu_{\gamma(n,K_0)}$.

Noting that
\[
\dfrac{\mu_m}{\mu_n} \le \dfrac{\mu_{\gamma(n,(\ell+1) K_0)}}{\mu_n} = \dfrac{\mu_{\gamma(n,(\ell+1) K_0)}}{\mu_{\gamma(n,(\ell+1) K_0-1)}}
\dfrac{\mu_{\gamma(n,(\ell+1) K_0-1)}}{\mu_{\gamma(n,(\ell+1) K_0-2)}}\cdots
\dfrac{\mu_{\gamma(n,1)}}{\mu_n} \le L_2^{(\ell+1) K_0}
\]
we get
\begin{equation}\label{eq:aux-bound-cAmn-4}
\ell > \dfrac{\log \left(\mu_m/\mu_n\right)}{K_0\log L_2}-1
\quad \Leftrightarrow \quad
\e^{\ell} > \e^{-1} \, \left(\mu_m/\mu_n\right)^{1/(K_0 \log L_2)}.
\end{equation}
It follows from~\eqref{eq:aux-bound-cAmn-3} and~\eqref{eq:aux-bound-cAmn-4} that, for $m \ge n$,
\[
\begin{split}
\|\cA_{m,1} x\|_m
& \ge L \e^{-1} \, \left(\mu_m/\mu_n\right)^{1/(K_0 \log L_2)} \|\cA_{n,1} x\|_n.
\end{split}
\]
Thus
\[
\begin{split}
\|\cA_{m,1} x\|_m
& \ge C\, \left(\mu_m/\mu_n\right)^b \|\cA_{n,1} x\|_n.
\end{split}
\]
where $C=L \e^{-1}$ and $b=1/(K_0 \log L_2)$.

Let now $\mu_m \ge \mu_n$, $z \in Z$ and choose $v\in Z_n$ such that $\cA_{n,1}v=z$. We have
\[
\begin{split}
\|\cA_{m,n}z\|_m  = \|\cA_{m,1}v\|_m  \ge C \left(\mu_m/\mu_n\right)^b \|\cA_{n,1} v\|_n
 \ge C \left(\mu_m/\mu_n\right)^a \|z\|_n.
\end{split}
\]
Letting $z=\cA_{n,m}x$, we get
\[
\begin{split}
\|\cA_{n,m}x\|_n \le C^{-1} \left(\mu_m/\mu_n\right)^{-b} \|x\|_m,
\end{split}
\]
and the lemma is proved by letting $D_2=C^{-1}$.
\end{proof}

We consider now the projection $P_n:X \to X_n$ associated with the decomposition $X=X_n\oplus Z_n$. It follows from the proof of Lemma~4.2 in~\cite{Mi.Ra.Sc.1998} that
\begin{equation}\label{eq:prelim-bound-Pn}
\|P_n\| \le 2/\gamma_n,
\end{equation}
where
\[
\gamma_n=\inf \left\{ \|u+v\|_n: \ (u,v) \in X_n\oplus Z_n \ \wedge \ \|u\|=\|v\|=1 \right\}.
\]
\begin{lemma}\label{lemma:bound-Pn}
We have $\displaystyle \sup_{n \in \N} \|P_n\|<+\infty$.
\end{lemma}

\begin{proof}
Let $(u,v) \in X_n\oplus Z_n$ with $\|u\|=\|v\|=1$. Using~\eqref{eq:teo:exist-dich:exponential-bound}, we
conclude that
\[
\|\cA_{m,n}(u+v)\|_m \le M(\mu_m/\mu_n)^\lambda \|u+v\|_n.
\]
Thus, using Lemma~\ref{lemma:estimate-cA-X} and Lemma~\ref{lemma:estimate-cA-Z}, we obtain, for $m \ge n$,
\[
\begin{split}
\|u+v\|_n
& \ge \frac{1}{M(\mu_m/\mu_n)^\lambda} \|\cA_{m,n}(u+v)\|_m\\
& \ge \frac{1}{M(\mu_m/\mu_n)^\lambda} \left(\|\cA_{m,n}v\|_m-\|\cA_{m,n}u\|_m\right)\\
& \ge \frac{1}{M(\mu_m/\mu_n)^\lambda} \left(D_2^{-1}(\mu_m/\mu_n)^b-D_1(\mu_m/\mu_n)^{-a}\right).
\end{split}
\]
Letting $N_0$ be chosen so that
$D_2^{-1}N_0^b-D_1N_0^{-a}>0$,
and choosing $m_0,n_0 \in \N$ such that $m_0\ge n_0$ and $\mu_{m_0} > N_0 \mu_{n_0}$, we conclude that
\[
\|u+v\|_n \ge
\frac{1}{M(\mu_{m_0}/\mu_{n_0})^\lambda} \left(D_2^{-1}N_0^b-D_1N_0^{-a}\right)>0,
\]
and the result follows from the proof of Lemma~4.2 in~\cite{Mi.Ra.Sc.1998} (see comments above).
\end{proof}
The proof of the theorem follows from the results in Lemmas~\ref{lemma:estimate-cA-X},~\ref{lemma:estimate-cA-Z} and~\ref{lemma:bound-Pn}.
\end{proof}

\section{Relation with nonuniform $\mu$-dichotomy}

We say that the sequence of linear operators $(A_m)_{m\in \N}$ (or alternatively that the linear difference equation $x_{m+1} = A_m x_m, \ m \in \N$) admits a \textit{nonuniform $\mu$-dichotomy} if there are projections $P_m$, $m\in\N$, such that $A_m|\ker P_m \to \ker P_{m+1}$ is invertible,
   $$P_m \cA_{m,n} = \cA_{m,n} P_n,\quad m, n\in\N,$$
and there are constants $\lambda, D > 0$ and $\eps>0$ such that, for every $n,m \in \N$, we have
\begin{equation}\label{eq:nonun-dich-1}
\|\cA_{m,n} P_n\| \le D \left(\mu_m/\mu_n\right)^{-\lambda} \mu_n^\eps, \quad \text{for} \ m \ge n
\end{equation}
and
\begin{equation}\label{eq:nonun-dich-2}
\|\cA_{m,n}^{-1}Q_n\| \le D \left(\mu_n/\mu_m\right)^{-\lambda} \mu_n^\eps, \quad \text{for} \ m \le n,
\end{equation}
where $Q_m=\Id-P_m$ is the complementary projection and, for $m \le n$, we use the notation $\cA_{m,n}=(\cA_{n,m})^{-1}: \ker P_n \to \ker P_m$. If additionally there are constants $K,b>0$ and $\gamma>0$ such that
\begin{equation}\label{eq:nonunif-strong-dich}
\|\cA_{m,n}\| \le K \left(\mu_n/\mu_m\right)^b \mu_n^\gamma, \quad \text{for} \ m \ge n,
\end{equation}
we say that the sequence of linear operators $(A_m)_{m\in \N}$ (or alternatively that the linear difference equation $x_{m+1} = A_m x_m, \ m \in \N$) admits a \textit{strong nonuniform $\mu$-dichotomy}

 The following result establishes an equivalence between the existence of a nonuniform $\mu$-dichotomy and the existence of a nonuniform $\mu$-dichotomy with respect to a sequence of norms satisfying some condition. This result generalizes Proposition 4.2 in~\cite{D.MN.2019}. The proof consists in adapt the arguments used in the proof of Proposition 4.2 in~\cite{D.MN.2019} to our new setting.
\begin{theorem}\label{teo:equivalence}
The following properties are equivalent:
\begin{enumerate}[1.]
\item The sequence $(A_m)_{m\in \N}$ admits a nonuniform $\mu$-dichotomy;
\item There is $C>0$ and $\eps\ge 0$ such that the sequence $(A_m)_{m\in \N}$ admits a $\mu$-dichotomy with respect to a sequence of norms $\|\cdot\|_n$ satisfying, for $x \in X$ and $m \in \N$,
\begin{equation}\label{eq:teo:equivalence-1}
\|x\| \le \|x\|_m\le C \mu_m^\eps \|x\|.
\end{equation}
\end{enumerate}
\end{theorem}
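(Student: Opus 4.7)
The plan is to handle the two implications separately. The direction $(2)\Rightarrow(1)$ is a short calculation: if $(A_m)$ admits a $\mu$-dichotomy with respect to $(\|\cdot\|_m)$ with constants $D,\lambda$, then for $m\ge n$ and $x\in X$ one has
\[
\|\cA_{m,n}P_nx\|\le\|\cA_{m,n}P_nx\|_m\le D(\mu_m/\mu_n)^{-\lambda}\|x\|_n\le DC(\mu_m/\mu_n)^{-\lambda}\mu_n^{\eps}\|x\|,
\]
and symmetrically for $\|\cA_{m,n}Q_nx\|$ when $m\le n$, which gives~\eqref{eq:nonun-dich-1}--\eqref{eq:nonun-dich-2}.

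For $(1)\Rightarrow(2)$, the idea is to introduce Lyapunov-type norms adapted to the dichotomy. Fix any $\lambda'\in(0,\lambda)$ and, for $n\in\N$, set
\[
\|x\|_n:=\sup_{m\ge n}\|\cA_{m,n}P_nx\|\,(\mu_m/\mu_n)^{\lambda'}+\sup_{m\le n}\|\cA_{m,n}Q_nx\|\,(\mu_n/\mu_m)^{\lambda'},
\]
where the second supremum uses the convention $\cA_{m,n}=(\cA_{n,m})^{-1}\colon\ker P_n\to\ker P_m$ for $m\le n$. Taking $m=n$ in each supremum yields $\|x\|_n\ge\|P_nx\|+\|Q_nx\|\ge\|x\|$, while~\eqref{eq:nonun-dich-1}--\eqref{eq:nonun-dich-2} give
\[
\|x\|_n\le D\mu_n^{\eps}\|x\|\Bigl(\sup_{m\ge n}(\mu_m/\mu_n)^{-\lambda+\lambda'}+\sup_{m\le n}(\mu_n/\mu_m)^{-\lambda+\lambda'}\Bigr)\le 2D\mu_n^{\eps}\|x\|,
\]
since $\lambda'<\lambda$ and $\mu$ is increasing. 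This already delivers~\eqref{eq:teo:equivalence-1} with $C=2D$ and the same $\eps$, and in particular shows each $\|\cdot\|_n$ is equivalent to $\|\cdot\|$.

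It remains to check the $\mu$-dichotomy with respect to $(\|\cdot\|_n)$. For $m\ge n$, invariance $\cA_{m,n}P_n=P_m\cA_{m,n}$ forces $Q_m\cA_{m,n}P_nx=0$, so only the stable sum survives and
\[
\|\cA_{m,n}P_nx\|_m=\sup_{k\ge m}\|\cA_{k,n}P_nx\|\,(\mu_k/\mu_m)^{\lambda'}\le(\mu_m/\mu_n)^{-\lambda'}\sup_{k\ge n}\|\cA_{k,n}P_nx\|\,(\mu_k/\mu_n)^{\lambda'}\le(\mu_m/\mu_n)^{-\lambda'}\|x\|_n,
\]
which is~\eqref{eq:dich-1} with $D=1$ and rate $\lambda'$. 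The analogous computation for $m\le n$ with $\cA_{m,n}Q_n$, noting $\cA_{m,n}Q_nx\in\ker P_m$ so the stable sum vanishes, gives~\eqref{eq:dich-2}.

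I do not foresee a serious obstacle: the proof is essentially bookkeeping with two suprema and the invariance of the projections, exactly as in the exponential/polynomial cases. The only mildly delicate point is ensuring the suprema defining $\|x\|_n$ are finite and honest norms, which is immediate from the nonuniform dichotomy bounds and~\eqref{eq:teo:equivalence-1}. The argument therefore follows closely the scheme of Proposition~4.2 in~\cite{D.MN.2019}, with $(\mu_m/\mu_n)^{\lambda'}$ replacing the exponential (or polynomial) weight throughout.
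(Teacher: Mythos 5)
Your proof is correct and takes essentially the same approach as the paper: the $(2)\Rightarrow(1)$ direction is the identical one-line estimate, and your adapted norms are exactly the paper's norms~\eqref{eq:seq-of-norms} except that you weight by $(\mu_k/\mu_n)^{\lambda'}$ with $\lambda'<\lambda$ where the paper takes $\lambda'=\lambda$ (both choices work, since the relevant suprema are then bounded by $1$). No gaps.
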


\begin{proof}
Assuming that $(A_m)_{m\in \N}$ admits a nonuniform $\mu$-dichotomy, define a sequence of norms $(\|\cdot\|_n)_{n \in \N}$ by
\begin{equation}\label{eq:seq-of-norms}
\|x\|_m := \sup_{k \ge m} (\|\cA_{k,m} P_m x\|(\mu_k/\mu_m)^\lambda)+\sup_{k \le m} (\|\cA_{k,m} Q_m x\|(\mu_m/\mu_k)^\lambda),
\end{equation}
for $x \in X$ and $n \in \N$. We have
\[
\begin{split}
\|\cA_{m,n}P_n x\|_m
& = \sup_{k \ge m} (\|\cA_{k,n} P_n x\|(\mu_k/\mu_m)^\lambda) \\
& \le \sup_{k \ge m} (D(\mu_k/\mu_n)^{-\lambda}(\mu_k/\mu_m)^\lambda \|x\|_n)\\
& = D (\mu_n/\mu_m)^\lambda \|x\|_n
\end{split}
\]
and similarly
\[
\begin{split}
\|\cA_{m,n}Q_n x\|_m
& = \sup_{k \le m} (\|\cA_{k,n}Q_n x\|(\mu_m/\mu_k)^\lambda) \\
& \le \sup_{k \ge m} (D(\mu_n/\mu_k)^{-\lambda}(\mu_m/\mu_k)^\lambda \|x\|_n)\\
& = D (\mu_m/\mu_n)^\lambda \|x\|_n.
\end{split}
\]
Additionally, we have
\[
\begin{split}
& \|x\|_m \\
& \le \sup_{k \ge m} (D(\mu_k/\mu_m)^{-\lambda}\mu_m^\eps(\mu_k/\mu_m)^\lambda\|x\|_n)+\sup_{k \le m} (D(\mu_m/\mu_k)^{-\lambda}\mu_m^\eps(\mu_m/\mu_k)^\lambda\|x\|_n\\
& =2D\mu_m^\eps \|x\|_n,
\end{split}
\]
which proves that $(A_m)_{m\in \N}$ admits a $\mu$-dichotomy with respect to the sequence of norms given by~\eqref{eq:seq-of-norms}.

Assume now that $(A_m)_{m\in \N}$ admits a $\mu$-dichotomy with respect to a sequence of norms $(\|\cdot\|_n)_{n \in \N}$ and that $\|x\| \le \|x\|_m\le C \mu_m^\eps \|x\|$. We have
\[
\|\cA_{m,n} P_n x\|\le \|\cA_{m,n} P_n x\|_n \le D (\mu_m/\mu_n)^{-\lambda} \|x\|_n \le CD (\mu_m/\mu_n)^{-\lambda}\mu_n^\eps \|x\|.
\]
and similarly
\[
\|\cA_{m,n} Q_n x\|\le \|\cA_{m,n} Q_n x\|_n \le D (\mu_n/\mu_m)^{-\lambda} \|x\|_n \le CD (\mu_n/\mu_m)^{-\lambda}\mu_n^\eps \|x\|.
\]
which proves that $(A_m)_{m\in \N}$ admits a nonuniform $\mu$-dichotomy.
\end{proof}

Next, we establish an equivalence between the existence of a strong nonuniform $\mu$-dichotomy and the existence of a nonuniform $\mu$-dichotomy with respect to a sequence of norms satisfying some condition. This result generalizes Proposition 4.3 in~\cite{D.MN.2019}. The proof consists in adapting the arguments used in the proof of Proposition 4.3 in~\cite{D.MN.2019} to our new setting.
\begin{theorem}\label{teo:equivalence-strong}
The following properties are equivalent:
\begin{enumerate}[1.]
\item The sequence $(A_m)_{m\in \N}$ admits a strong nonuniform $\mu$-dichotomy;
\item There are $C,M,a>0$ and $\eps\ge 0$ such that the sequence $(A_m)_{m\in \N}$ admits a nonuniform $\mu$-dichotomy with respect to a sequence of norms $\|\cdot\|_n$ satisfying, for $x \in X$,
\begin{equation}\label{teo:equivalence-strong-1}
\|x\| \le \|x\|_m\le C \mu_m^\eps \|x\|, \quad m \in \N
\end{equation}
and
\begin{equation}\label{teo:equivalence-strong-2}
\|\cA_{m,n}x\|_m \le M (\mu_n/\mu_m)^a \|x\|_n, \quad m \ge n.
\end{equation}
\end{enumerate}
\end{theorem}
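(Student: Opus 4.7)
The plan is to prove both implications by building on Theorem~\ref{teo:equivalence}. The reverse implication (2)$\Rightarrow$(1) is essentially automatic: the nonuniform $\mu$-dichotomy part follows directly from Theorem~\ref{teo:equivalence}, and the strong bound~\eqref{eq:nonunif-strong-dich} is obtained by chaining~\eqref{teo:equivalence-strong-2} with the upper bound in~\eqref{teo:equivalence-strong-1}: for $m \ge n$ and $x \in X$,
\[
\|\cA_{m,n}x\| \le \|\cA_{m,n}x\|_m \le M(\mu_n/\mu_m)^a\|x\|_n \le MC(\mu_n/\mu_m)^a\mu_n^\eps\|x\|,
\]
which is~\eqref{eq:nonunif-strong-dich} with $b=a$ and $\gamma=\eps$.

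For the forward implication (1)$\Rightarrow$(2) I would augment the sequence of norms from the proof of Theorem~\ref{teo:equivalence} by adjoining a third supremum that encodes~\eqref{eq:nonunif-strong-dich}:
\[
\|x\|_m := \sup_{k \ge m}\|\cA_{k,m}P_mx\|(\mu_k/\mu_m)^\lambda + \sup_{k \le m}\|\cA_{k,m}Q_mx\|(\mu_m/\mu_k)^\lambda + \sup_{k \ge m}\|\cA_{k,m}Q_mx\|(\mu_m/\mu_k)^b.
\]
The upper bound in~\eqref{teo:equivalence-strong-1}, with $\eps$ replaced by $\max(\eps,\gamma)$, follows by applying~\eqref{eq:nonun-dich-1} to the first supremum,~\eqref{eq:nonun-dich-2} to the second, and~\eqref{eq:nonunif-strong-dich} to the third; the lower bound $\|x\|\le C'\|x\|_m$ is recovered by taking $k=m$ in the first two suprema, producing $\|P_mx\|+\|Q_mx\|\ge\|x\|$. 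The standard $\mu$-dichotomy estimates~\eqref{eq:dich-1}--\eqref{eq:dich-2} for the new norms are verified exactly as in Theorem~\ref{teo:equivalence}, noting that the extra third term vanishes on $\im P_m$ and therefore does not interfere with the stable estimate.

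It remains to check~\eqref{teo:equivalence-strong-2}. Decomposing $x=P_nx+Q_nx$, the bound on $\cA_{m,n}P_nx$ is immediate from~\eqref{eq:dich-1} (in the new norms), since $(\mu_m/\mu_n)^{-\lambda}\le 1$ for $m\ge n$. For $\cA_{m,n}Q_nx$, each of the three suprema defining $\|\cA_{m,n}Q_nx\|_m$ reduces, via the cocycle property and the invariance $Q_m\cA_{m,n}=\cA_{m,n}Q_n$, to an expression of the form $\cA_{k,n}Q_nx$; the contributions with $k\le n$ are controlled by the second term of $\|x\|_n$, while those with $k\ge m$ are controlled by the third.

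The main obstacle lies in the intermediate range $n\le k\le m$ that arises in the second supremum of $\|\cA_{m,n}Q_nx\|_m$: here the dichotomy bound~\eqref{eq:nonun-dich-2} does not apply, because it requires $k\le n$. It is precisely at this point that the augmented third component of $\|x\|_n$ pays off, supplying the estimate $\|\cA_{k,n}Q_nx\|\le(\mu_n/\mu_k)^b\|x\|_n$. Optimizing the product $(\mu_n/\mu_k)^b(\mu_m/\mu_k)^\lambda$ over $k\in[n,m]$ produces a bound of the form $(\mu_n/\mu_m)^{\min(b,\lambda)}\|x\|_n$ on this piece, which is absorbed into~\eqref{teo:equivalence-strong-2} with $a=\min(b,\lambda)$.
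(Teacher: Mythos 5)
Your proposal is correct and follows essentially the same route as the paper: the reverse implication is the identical chaining of \eqref{teo:equivalence-strong-2} with \eqref{teo:equivalence-strong-1} after invoking Theorem~\ref{teo:equivalence}, and for the forward implication the paper uses exactly your augmented norm, whose unstable part carries the extra supremum $\sup_{k>n}\bigl(\|\cA_{k,n}Q_nx\|(\mu_k/\mu_n)^{-b}\bigr)$. The only cosmetic differences are that the paper normalizes $\lambda\le b$ at the outset (so it obtains $a=b$ where you obtain $\min(b,\lambda)$) and that the nonuniformity exponent for the new norms comes out as $\eps+\gamma$ rather than your $\max(\eps,\gamma)$, since bounding the third supremum costs a factor $\mu_m^{\gamma}$ on top of $\|Q_mx\|\le D\mu_m^{\eps}\|x\|$.
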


\begin{proof}
Assume that the sequence $(A_m)_{m\in \N}$ admits a strong nonuniform $\mu$-dichotomy. We assume, without loss of generality, that the constant $\lambda$ in~\eqref{eq:nonun-dich-1} and \eqref{eq:nonun-dich-2} is less or equal than the constant $b$ in~\eqref{eq:nonunif-strong-dich}.

Letting $m=n$ in~\eqref{eq:nonun-dich-2} we conclude that $\|Q_n x\| \le D\mu_n^\eps$ for all $n \in \N$. Thus, using~\eqref{eq:nonunif-strong-dich} we have
\begin{equation}\label{eq:equivalence-strong-aux1}
\|\cA_{m,n}Q_n x\| \le K \left(\mu_n/\mu_m\right)^b \mu_n^\eps \|Q_n x\| \le KD \left(\mu_n/\mu_m\right)^b \mu_n^{2\eps},
\end{equation}
for $m \ge n$ and $x \in X$.

For $n \in \N$ and $x \in X$ define
\[
\|x\|_n^s:=\sup_{m \ge n} (\|\cA_{m,n}P_nx\|(\mu_m/\mu_n)^\lambda),
\]
\[
\|x\|_n^u:=\sup_{m \le n} (\|\cA_{m,n}Q_nx\|(\mu_n/\mu_m)^\lambda)+\sup_{m > n} (\|\cA_{m,n}Q_nx\|(\mu_m/\mu_n)^{-b})
\]
and
\[
\|x\|_n:=\|x\|_n^s+\|x\|_n^u.
\]
We obtain
\[
\|x\| \le \|P_m x\|+\|Q_m x\| \le \|P_m x\|_n^s + \|Q_m x\|_n^u = \|x\|_n
\]
and, using~\eqref{eq:nonun-dich-1},~\eqref{eq:nonun-dich-2} and~\eqref{eq:equivalence-strong-aux1}, we get
\[
\begin{split}
\|x\|_n
& = \sup_{m \ge n} (D \mu_n^\eps\|x\|)+ \sup_{m \le n} (D \mu_n^\eps\|x\|)+\sup_{m > n} (K \mu_n^\gamma \|Q_nx\|)\\
& = 2D \mu_n^\eps\|x\|+KD \mu_n^{\gamma+\eps} \|x\|\\
& \le (2+K)D \, \mu_n^{\eps+\gamma} \|x\|
\end{split}
\]
and we conclude that~\eqref{teo:equivalence-strong-1} holds with $C=(2+K)D$ and $\eps$ replaced by $\eps+\gamma$.

Let now $m\ge n$ and $x\in X$. Since $\|\cA_{m,n}P_nx\|_m^u=0$, we have, for $m \ge n$ and $x \in X$,
\[
\|\cA_{m,n}P_nx\|_m =\sup_{k \ge m} (\|\cA_{k,m}\cA_{m,n}P_nx\|(\mu_k/\mu_m)^\lambda)\le(\mu_m/\mu_n)^{-\lambda} \|x\|_n.
\]
Noting now that $\|\cA_{m,n}Q_nx\|_m^s=0$, we have, for $m \le n$ and $x \in X$,
\[
\begin{split}
\|\cA_{m,n}Q_nx\|_m
& =\sup_{k \le m} (\|\cA_{k,n}Q_nx\|(\mu_m/\mu_k)^\lambda)+\sup_{k > m} (\|\cA_{k,n}Q_nx\|(\mu_k/\mu_m)^{-b})\\
& \le 2(\mu_n/\mu_m)^{-\lambda}\|x\|_n.
\end{split}
\]
Finally, for $m \ge n$ and $x \in X$, we get
\[
\begin{split}
\|\cA_{m,n}Q_nx\|_m
& =\sup_{k \le m} (\|\cA_{k,n}Q_nx\|(\mu_m/\mu_k)^\lambda)+\sup_{k > m} (\|\cA_{k,n}Q_nx\|(\mu_k/\mu_m)^{-b})\\
& \le 2(\mu_n/\mu_m)^b\|x\|_n.
\end{split}
\]
We conclude that $(A_m)_{m \in \N}$ admits a nonuniform $\mu$-dichotomy with respect to the sequence of norms $\|\cdot\|_m$. Notice that for $m\ge n$ we have
\[
\begin{split}
\|\cA_{m,n}x\|_m
& \le \|\cA_{m,n}P_nx\|_m  + \|\cA_{m,n}Q_nx\|_m  \\
& \le (\mu_m/\mu_n)^{-\lambda} \|x\|_n  + 2(\mu_n/\mu_m)^b\|x\|_n \\
& \le 3(\mu_n/\mu_m)^b\|x\|_n.
\end{split}
\]
and we obtain~\eqref{teo:equivalence-strong-2} with $M=3$ and $a=b$.

To prove the converse statement, we assume that the sequence $(A_m)_{m\in \N}$ admits a nonuniform $\mu$-dichotomy with respect to a sequence of norms $\|\cdot\|_n$ satisfying~\eqref{teo:equivalence-strong-1} and~\eqref{teo:equivalence-strong-2}. By Theorem~\ref{teo:equivalence} we conclude that $(A_m)_{m\in \N}$ admits a nonuniform $\mu$-dichotomy. Since
$$\|\cA_{m,n}x\| \le \|\cA_{m,n}x\|_m \le M (\mu_n/\mu_m)^a \|x\|_n \le CM (\mu_n/\mu_m)^a \mu_m^\eps \|x\|$$
we conclude that the nonuniform $\mu$-dichotomy is a strong nonuniform $\mu$-dichotomy. The result is proved.
\end{proof}

\section{Parametrerized robustness of nonuniform $\mu$-dichotomies}

Let $I$ be a Banach space and, for each $\lambda \in I$, let $(B_m(\lambda))_{m \in \N}$ be a sequence of bounded linear operators acting on $X$. Assume further that, for any $n \in \N$, the function $I \ni \lambda \mapsto B_n(\lambda)$ is continuous.

\begin{theorem}\label{teo:robustness-dich-bounds}
Let $\mu$ be a discrete growth rate and assume that $(A_m)_{m \in \N}$ admits a $\mu$-dichotomy with respect to the sequence of norms $(\|\cdot\|_m)_{m \in \N}$. Moreover, assume that there is an increasing sequence $(q_n)$ with $q_n\ge n+1$, $n \in \N$, and constants $L_1,L_2>1$ such that, for all $n \in\ \N$,
\begin{equation}\label{teo:robustness-dich-bounds-a1}
L_1 \le \frac{\mu_{q_n}}{\mu_n} \le L_2,
\end{equation}
and that there are constants $\lambda,M>0$ such that, for all $m \ge n$ and $x \in X$,
\begin{equation}\label{teo:robustness-dich-bounds-a2}
\|\cA_{m,n} x\|_m \le M (\mu_m/\mu_n)^\lambda\|x\|_n.
\end{equation}
Assume further that, for all $m \in \N$, $x \in X$ and $\lambda,\mu \in I$, there is $c>0$ such that
\begin{equation}\label{eq:robustness-dich-bounds-1}
\|\phi_{m+1}^\mu B_m(\lambda)x\|_{m+1} \le c\|x\|_m.
\end{equation}
Then, if $c>0$ is sufficiently small, the sequence $(A_m+B_m(\lambda))_{m \in \N}$ admits a nonuniform $\mu$-dichotomy with respect to the sequence of norms $\|\cdot\|_m$ for each $\lambda \in I$. Additionally, if there is $d>0$ such that, for all $m \in \N$, $x \in X$ and $\lambda,\mu \in I$, we have
\begin{equation}\label{eq:robustness-dich-bounds-2}
\|\phi_{m+1}^\mu \left(B_m(\lambda)-B_m(\mu)\right)x\|_{m+1} \le d\|\lambda-\mu\| \, \|x\|_m,
\end{equation}
we can choose the projections $P_{m,\lambda}$ in such a way that $\lambda \mapsto P_{m,\lambda}$ is locally Lipschitz.
\end{theorem}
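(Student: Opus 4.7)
The plan is to treat the passage from $(A_m)$ to $(A_m+B_m(\lambda))$ as a small bounded perturbation at the level of the operator $T^\mu_{\im Q_1}$, and then invoke Theorems~\ref{teo:invertibility} and~\ref{teo:exist-dich} as a black box. For each $\lambda\in I$, let $\widetilde T^\mu_{\im Q_1,\lambda}$ denote the operator constructed as in~\eqref{eq:def-TmuZ} from the perturbed sequence $\widetilde A_m:=A_m+B_m(\lambda)$. Setting $(R_\lambda x)_1=0$ and $(R_\lambda x)_m:=\phi^\mu_m B_{m-1}(\lambda)x_{m-1}$ for $m\ge 2$, hypothesis~\eqref{eq:robustness-dich-bounds-1} yields $\|(R_\lambda x)_m\|_m\le c\|x\|_\infty$. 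Hence $R_\lambda$ extends to a bounded operator from $(\cD(T^\mu_{\im Q_1}),\|\cdot\|_{T^\mu_{\im Q_1}})$ into $Y_0$ of norm at most $c$; moreover the domains of $T^\mu_{\im Q_1}$ and $\widetilde T^\mu_{\im Q_1,\lambda}$ coincide, and their graph norms are equivalent.

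Since $(A_m)$ admits a $\mu$-dichotomy with respect to $(\|\cdot\|_m)$, Theorem~\ref{teo:invertibility} makes $T^\mu_{\im Q_1}$ invertible. Writing
\[
\widetilde T^\mu_{\im Q_1,\lambda} = T^\mu_{\im Q_1}\bigl(\Id - (T^\mu_{\im Q_1})^{-1}R_\lambda\bigr),
\]
a Neumann series argument gives invertibility of $\widetilde T^\mu_{\im Q_1,\lambda}$ whenever $c\|(T^\mu_{\im Q_1})^{-1}\|<1$. To verify the remaining hypothesis of Theorem~\ref{teo:exist-dich}, namely an exponential-type bound on $\widetilde\cA^\lambda_{m,n}$, I would use the discrete variation-of-constants identity
\[
\widetilde\cA^\lambda_{m,n}x = \cA_{m,n}x + \sum_{k=n}^{m-1}\cA_{m,k+1}B_k(\lambda)\,\widetilde\cA^\lambda_{k,n}x,
\]
together with~\eqref{teo:robustness-dich-bounds-a2},~\eqref{eq:robustness-dich-bounds-1}, a discrete Gronwall inequality, and~\eqref{prop:ESTIMATES-est-=1} (to bound $\sum_{k=n+1}^{m}\mu_k'/\mu_k$ by a constant plus $\ln(\mu_m/\mu_n)$). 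This yields $\|\widetilde\cA^\lambda_{m,n}x\|_m\le \widetilde M(\mu_m/\mu_n)^\kappa\|x\|_n$ for $m\ge n$, with constants $\widetilde M,\kappa>0$ depending only on $M$, $L_2$, $c$, and the exponent appearing in~\eqref{teo:robustness-dich-bounds-a2}. Theorem~\ref{teo:exist-dich} applied to $(\widetilde A_m)$ with $Z=\im Q_1$ then produces the claimed $\mu$-dichotomy of $(A_m+B_m(\lambda))$ with respect to $(\|\cdot\|_m)$.

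For the Lipschitz dependence, hypothesis~\eqref{eq:robustness-dich-bounds-2} yields $\|R_\lambda - R_{\lambda'}\|\le d\|\lambda-\lambda'\|$, so $\lambda\mapsto(\widetilde T^\mu_{\im Q_1,\lambda})^{-1}$ is locally Lipschitz by the resolvent identity applied to the Neumann series above. Inspecting the splitting lemma in the proof of Theorem~\ref{teo:exist-dich}, for each $v\in X$ the element $z^k_k\in\widetilde\cA^\lambda_{k,1}(\im Q_1)$ giving the $Z_k^\lambda$-component of $v$ is obtained by applying $(\widetilde T^\mu_{\im Q_1,\lambda})^{-1}$ to the fixed sequence $w^k$ constructed there, and therefore depends Lipschitz-continuously on $\lambda$ with a local constant. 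Setting $Q_{m,\lambda}v:=z^m_m$ and $P_{m,\lambda}:=\Id - Q_{m,\lambda}$ then yields the desired locally Lipschitz choice of projections. The main obstacle is precisely this final step: Theorem~\ref{teo:exist-dich} produces the projections only implicitly through the intrinsic splitting $X=X_n\oplus Z_n$, so one must re-enter the splitting lemma and verify that its construction of $z^k_k$ is genuinely a locally Lipschitz function of $\lambda$ via $(\widetilde T^\mu_{\im Q_1,\lambda})^{-1}$.
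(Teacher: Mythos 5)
Your proposal follows essentially the same route as the paper: perturb the operator $T^\mu_{\im Q_1}$ by the bounded operator $R_\lambda$ of norm at most $c$, invert by a Neumann series, obtain the growth bound on $\widetilde\cA^\lambda_{m,n}$ from the variation-of-constants identity combined with Lemma~\ref{prop:ESTIMATES} (the paper does this by induction rather than by citing Gronwall, and uses the $\alpha\in(0,1)$ estimate rather than~\eqref{prop:ESTIMATES-est-=1}, but the content is the same), apply Theorem~\ref{teo:exist-dich}, and finally express the projections explicitly as $(\widetilde T^\mu_{\im Q_1,\lambda})^{-1}$ applied to the fixed sequences $w^m$ from the splitting lemma so that the resolvent identity yields local Lipschitz dependence. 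This matches the paper's argument in all essential respects.
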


\begin{proof}
By Theorem~\ref{teo:invertibility} the operator $T_Z^\mu$ corresponding to $(A_m)_{m \in \N}$ is invertible. Let $T_{Z,\lambda}^\mu$ be the operator in~\eqref{eq:def-TmuZ} correponding to the sequence of bounded linear operators $\left(A_m+B_m(\mu)\right)_{m \in \N}$. Since $(T^\mu_{Z,\lambda})_0 -(T^\mu_Z)_0=0-0=0$ and, for $m \ge 1$,
\[
\begin{split}
(T^\mu_{Z,\lambda})_m -(T^\mu_Z)_m
& =\phi^\mu_m(x_m-(A_{m-1}+B_{m-1}(\lambda))x_{m-1}-x_m+A_{m-1}x_{m-1})\\
& =\phi^\mu_m B_{m-1}(\lambda)x_{m-1},
\end{split}
\]
we conclude that, for each $x \in Y_Z$ and $\lambda \in I$,
\begin{equation}\label{eq:follow-from}
\begin{split}
\|(T^\mu_{Z,\lambda}-T^\mu_Z)x\|_\infty
& =\sup_{m\ge 2} \|\phi^\mu_m B_{m-1}(\lambda)x_{m-1}\|_m \\
& \le c \sup_{m\ge 2} \|x_{m-1}\|_{m-1} = c\|x\|_\infty \le c\|x\|_{T^\mu_Z}.
\end{split}
\end{equation}
Thus, for each $x \in Y_Z$ and $\lambda \in I$,
\begin{equation}
\|T^\mu_{Z,\lambda}x\|_\infty \le \|T^\mu_Z\|_\infty+\|(T^\mu_{Z,\lambda}-T^\mu_Z)x\|_\infty \le c\|x\|_{T^\mu_Z}.
\end{equation}
We conclude that, for each $\lambda \in I$, the domain of the operator $T^\mu_{Z,\lambda}$ is $\cD(T^\mu_Z)$ and that
$T^\mu_{Z,\lambda}:(\cD(T^\mu_Z),\|\cdot\|_{T^\mu_Z})\to Y_0$ is a bounded linear operator. Additionally, it follows from~\eqref{eq:follow-from}
that $T^\mu_{Z,\lambda}$ is invertible whenever $c>0$ is sufficiently small.

Denote by $(\cC_{m,n}^\lambda)_{(m,n)\in \Delta}$ the discrete evolution family associated with the sequence $(A_m+B_m(\lambda))_{m\in \N}$, given by
\[
\cC_{m,n}^\lambda=
\begin{cases}
(A_{m-1}+B_{m-1}(\lambda))\cdots (A_n+B_n(\lambda)), & m>n\\
Id, & m=n
\end{cases}.
\]
The following auxiliary result establishes a bound for the growth of $\|\cC_{m,n}^\lambda x\|$.
\begin{lemma}\label{lemma:bound-growth-norm-cC}
Let $0<\delta<1$ and assume that $M>1$ and that the constant $c>0$ in~\eqref{eq:robustness-dich-bounds-1} is sufficiently small so that $N:=M/(1-cM/\delta)>1$. Then, for all $k \ge n$, the following holds:
\begin{equation}\label{eq:bound-growth-norm-cC}
\|\cC_{k,n}^\lambda x\|_k \le N \left(\mu_k/\mu_n\right)^{\lambda+\delta} \|x\|_n.
\end{equation}
\end{lemma}
\begin{proof}
Let $n \in \N$ and $0<\delta<1$. We will proceed by induction on $m \ge n$.

For $k=n$ we have
\[
\|\cC_{n,n}^\lambda x\|_n = \|x\|_n  = \left(\mu_n/\mu_n\right)^{\lambda+\delta} \|x\|_n \le N \left(\mu_n/\mu_n\right)^{\lambda+\delta} \|x\|_n,
\]
and we have~\eqref{eq:bound-growth-norm-cC} with $m=n$.

Assume now that~\eqref{eq:bound-growth-norm-cC} holds for all $k=n, \ldots m-1$. Using~\eqref{teo:robustness-dich-bounds-a2} and \eqref{eq:robustness-dich-bounds-1} we get
\[
\begin{split}
\|\cC_{m,n}^\lambda x\|_m
&  \le \|\cA_{m,n}x\|_m+\sum_{k=n}^{m-1} \|\cA_{m,k+1} B_k(\lambda) \, \cC^\lambda_{k,n}x\|_m\\
& \le \|\cA_{m,n}x\|_m+\sum_{k=n}^{m-1} M \left(\frac{\mu_m}{\mu_{k+1}}\right)^\lambda \|B_k(\lambda) \, \cC^\lambda_{k,n}x\|_{k+1}\\
& \le M \left(\frac{\mu_m}{\mu_n}\right)^\lambda \|x\|_n +\sum_{k=n}^{m-1} cM \left(\frac{\mu_m}{\mu_{k+1}}\right)^\lambda \frac{1}{\phi_{k+1}^\mu}\|\cC^\lambda_{k,n}x\|_k
\end{split}
\]
Thus, taking into account the assumption $0<\delta<1$ and the definition of $N$, we obtain
\[
\begin{split}
\|\cC_{m,n}^\lambda x\|_m
& \le M \left(\frac{\mu_m}{\mu_n}\right)^\lambda \|x\|_n +cMN\|x\|_n\sum_{k=n}^{m-1} \left(\frac{\mu_m}{\mu_{k+1}}\right)^\lambda \frac{\mu_{k+1}'}{\mu_{k+1}}
\left(\frac{\mu_k}{\mu_n}\right)^{\lambda+\delta}\\
& \le M \left(\frac{\mu_m}{\mu_n}\right)^\lambda \|x\|_n +cMN\left(\frac{\mu_m}{\mu_n}\right)^{\lambda} \, \mu_n^{-\delta}\|x\|_n\sum_{k=n}^{m-1} \mu_{k+1}' \mu_{k+1}^{-(1-\delta)}\\
& \le M \left(\frac{\mu_m}{\mu_n}\right)^\lambda \|x\|_n +\frac{c}{\delta}MN\left(\frac{\mu_m}{\mu_n}\right)^{\lambda} \|x\|_n \left(\left(\frac{\mu_m}{\mu_n}\right)^\delta-1\right)\\
& \le M(1+\frac{c}{\delta}N)\left(\frac{\mu_m}{\mu_n}\right)^{\lambda+\delta} \|x\|_n\\
& =N\left(\frac{\mu_m}{\mu_n}\right)^{\lambda+\delta} \|x\|_n.
\end{split}
\]
and~\eqref{eq:bound-growth-norm-cC} holds for $k=m$. The lemma is proved.
\end{proof}

By Theorem~\ref{teo:exist-dich}, noting that~\eqref{eq:bound-growth-norm-cC} corresponds to assumption~\eqref{eq:teo:exist-dich:exponential-bound} and that we have invertibility of $T_{Z,\lambda}^\mu$, we conclude that $(\cC_{m,n}^\lambda)_{(m,n)\in \Delta}$ admits a nonuniform $\mu$-dichotomy with respect to the sequence of norms $\|\cdot\|_m$ as long as the constant $c>0$ in~\eqref{eq:robustness-dich-bounds-1} is sufficiently small.

We note now that the projections $P_{m,\lambda}$ associated with the dichotomy considered for the evolution family $\cC_{m,n}^\lambda$ can be chosen so that
\[
P_{1,\lambda}v=v-((T^\mu_{Z,\lambda})^{-1} w^1)_1 \quad \text{ and } \quad P_{m,\lambda}v=((T^\mu_{Z,\lambda})^{-1} w^m)_m, \quad m>1
\]
where $w^1=(0,-\phi_2^\mu A_1v,0,0,\ldots)$ and $w^n$ are defined as in the proof of Lemma~\ref{lemma:bound-Pn}. We will consider these projections in our proof.

We have, for all $x \in \cD(T_Z^\mu)$ and $\lambda_1,\lambda_2 \in I$,
\begin{equation}\label{eq:^Tmu-e-Lip}
\begin{split}
\|(T^\mu_{Z,\lambda_1}-T^\mu_{Z,\lambda_2})x\|_\infty
& = \sup_{m\ge 2} \|\phi^\mu_m \left[B_{m-1}(\lambda)-B_{m-1}(\mu)\right]x_{m-1}\|_m \\
& \le d\|\lambda_1-\lambda_2\| \, \sup_{m\ge 2} \|x_{m-1}\|_{m-1} \\
& = d\|\lambda_1-\lambda_2\| \|x\|_\infty
\le d\|\lambda_1-\lambda_2\| \|x\|_{T^\mu_Z}.
\end{split}
\end{equation}
We conclude that the function $I \in \lambda \mapsto T^\mu_{Z,\lambda}$ is Lipschitz.

Using~\eqref{eq:^Tmu-e-Lip}, we have, for $m \in \N$,
\begin{equation}\label{eq:Plambda1-Plambda2}
\begin{split}
\|P_{m,\lambda_1}v-P_{m,\lambda_2}v\|_m
& = \left\|\left((T^\mu_{Z,\lambda_1}-T^\mu_{Z,\lambda_2})w^m\right)_m\right\|_m\\
& \le \left\|(T^\mu_{Z,\lambda_1}-T^\mu_{Z,\lambda_2})w^m\right\|_\infty\\
& \le \left\|(T^\mu_{Z,\lambda_1}-T^\mu_{Z,\lambda_2})w^m\right\|_{T^\mu_z}\\
& \le \|T^\mu_{Z,\lambda_1}-T^\mu_{Z,\lambda_2})\|\,\|w^m\|_\infty.
\end{split}
\end{equation}
Thus we conclude that
\begin{equation}\label{eq:Plambda1-Plambda2a}
\|P_{1,\lambda_1}v-P_{1,\lambda_2}v\|_1 \le \|T^\mu_{Z,\lambda_1}-T^\mu_{Z,\lambda_2})\|\,\|\phi_2^\mu A_1v\|_1
\end{equation}
and that, for $m > 1$,
\begin{equation}\label{eq:Plambda1-Plambda2b}
\|P_{m,\lambda_1}v-P_{m,\lambda_2}v\|_m \le \|T^\mu_{Z,\lambda_1}-T^\mu_{Z,\lambda_2})\|\,\|v\|_m.
\end{equation}

Noting, for all $\lambda_1,\lambda_2 \in I$, we have
\[
\begin{split}
\|(T^\mu_{Z,\lambda_1})^{-1}-(T^\mu_{Z,\lambda_2})^{-1}\|
& =\|(T^\mu_{Z,\lambda_1})^{-1}T^\mu_{Z,\lambda_2} (T^\mu_{Z,\lambda_2})^{-1}-(T^\mu_{Z,\lambda_1})^{-1}T^\mu_{Z,\lambda_1} (T^\mu_{Z,\lambda_2})^{-1}\| \\
& \le \|(T^\mu_{Z,\lambda_1})^{-1}\|\,\|(T^\mu_{Z,\lambda_2})^{-1}\|\|T^\mu_{Z,\lambda_1}-T^\mu_{Z,\lambda_2}\|
\end{split}
\]
and that
\[
\begin{split}
\|(T^\mu_{Z,\lambda_1})^{-1}\|
& =\|(T^\mu_{Z,\lambda_1})^{-1}-(T^\mu_{Z,\lambda_2})^{-1}+(T^\mu_{Z,\lambda_2})^{-1}\|\\
& \le \|(T^\mu_{Z,\lambda_1})^{-1}-(T^\mu_{Z,\lambda_2})^{-1}\|+\|(T^\mu_{Z,\lambda_2})^{-1}\|
\end{split}
\]
we conclude that,
\begin{equation}\label{bound-Tz-1-Tz-1}
\|(T^\mu_{Z,\lambda_1})^{-1}-(T^\mu_{Z,\lambda_2})^{-1}\|
= \frac{\|(T^\mu_{Z,\lambda_1})^{-1}\|^2\|T^\mu_{Z,\lambda_1}-T^\mu_{Z,\lambda_2}\|}
{1-\|(T^\mu_{Z,\lambda_1})^{-1}\|\|T^\mu_{Z,\lambda_1}-T^\mu_{Z,\lambda_2}\|}.
\end{equation}
Thus, by~\eqref{eq:Plambda1-Plambda2b}, \eqref{bound-Tz-1-Tz-1} and~\eqref{eq:^Tmu-e-Lip}, we obtain
\[
\begin{split}
\|P_{m,\lambda_1}v-P_{m,\lambda_2}v\|_m
& \le \frac{\|(T^\mu_{Z,\lambda_1})^{-1}\|^2\|T^\mu_{Z,\lambda_1}-T^\mu_{Z,\lambda_2}\|}
{1-\|(T^\mu_{Z,\lambda_1})^{-1}\|\|T^\mu_{Z,\lambda_1}-T^\mu_{Z,\lambda_2}\|}\,\|v\|_m\\
& \le \frac{d\|(T^\mu_{Z,\lambda_1})^{-1}\|^2\|v\|_m}
{1-\|(T^\mu_{Z,\lambda_1})^{-1}\|\|T^\mu_{Z,\lambda_1}-T^\mu_{Z,\lambda_2}\|}\,\|\lambda_1-\lambda_2\|
\end{split}
\]
for $m>1$ and, by~\eqref{eq:Plambda1-Plambda2a}, \eqref{bound-Tz-1-Tz-1} and~\eqref{eq:^Tmu-e-Lip}, we have
\[
\begin{split}
\|P_{1,\lambda_1}v-P_{1,\lambda_2}v\|_1
& \le \frac{d\phi^\mu_2 \|(T^\mu_{Z,\lambda_1})^{-1}\|^2 1\|A_1\|v\|_1}
{1-\|(T^\mu_{Z,\lambda_1})^{-1}\|\|T^\mu_{Z,\lambda_1}-T^\mu_{Z,\lambda_2}\|}\,\|\lambda_1-\lambda_2\|
\end{split}
\]
and we conclude that, for each $m \in \N$, the function $I \in \lambda \mapsto P_{m,\lambda}$ is locally Lipschitz.
\end{proof}

\begin{theorem}\label{teo:robustness-strong-dich-bounds}
Let $\mu$ be a discrete growth rate and assume that $(A_m)_{m \in \N}$ admits a strong nonuniform $\mu$-dichotomy. Moreover, assume that there is an increasing sequence $(q_n)$ with $q_n\ge n+1$, $n \in \N$, and constants $L_1,L_2>1$ such that, for all $n \in\ \N$,
\begin{equation}\label{teo:robustness-strong-dich-bounds-a1}
L_1 \le \frac{\mu_{q_n}}{\mu_n} \le L_2.
\end{equation}
Assume further that, for all $m \in \N$ and $\lambda,\mu \in I$, there is $c>0$ such that
\begin{equation}\label{eq:robustness-strong-dich-bounds-1}
\|\mu_{m+1}^\eps \phi_{m+1}^\mu B_m(\lambda)\| \le c,
\end{equation}
where $\eps>0$ is the exponent of $\mu_n$ the definition of strong nonuniform $\mu$-dichotomy.
Then, if $c>0$ is sufficiently small, the sequence $(A_m+B_m(\lambda))_{m \in \N}$ admits a strong nonuniform $\mu$-dichotomy for each $\lambda \in I$. Additionally, if there is $d>0$ such that
\begin{equation}\label{eq:robustness-strong-dich-bounds-2}
\|\mu_{m+1}^\eps\phi_{m+1}^\mu \left(B_m(\lambda)-B_m(\mu)\right)\| \le d|\lambda-\mu|,
\end{equation}
for all $m \in \N$ and $\lambda,\mu \in I$, we can choose the projections $P_{m,\lambda}$ in such a way that $\lambda \mapsto P_{m,\lambda}$ is locally Lipschitz.
\end{theorem}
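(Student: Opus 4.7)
The plan is to sandwich Theorem~\ref{teo:robustness-dich-bounds} between the two directions of Theorem~\ref{teo:equivalence-strong}. First, the strong nonuniform $\mu$-dichotomy hypothesis on $(A_m)$ is converted, via the direction $1 \Rightarrow 2$ of Theorem~\ref{teo:equivalence-strong}, into the existence of a sequence of norms $(\|\cdot\|_m)$ satisfying $\|x\| \le \|x\|_m \le C\mu_m^\eps\|x\|$ for which $(A_m)$ admits a $\mu$-dichotomy with respect to $(\|\cdot\|_m)$ together with the uniform growth bound~\eqref{teo:equivalence-strong-2}. This growth bound is exactly hypothesis~\eqref{eq:teo:exist-dich:exponential-bound} of Theorem~\ref{teo:robustness-dich-bounds}, so we are in a position to apply it, and by Theorem~\ref{teo:invertibility} the operator $T^\mu_Z$ attached to $(A_m)$ is already invertible.

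The next step is to translate the smallness hypotheses on the perturbation. Using $\|\cdot\| \le \|\cdot\|_m$ and $\|\cdot\|_{m+1} \le C\mu_{m+1}^\eps\|\cdot\|$ we obtain
\[
\|\phi^\mu_{m+1}B_m(\lambda)x\|_{m+1} \le C\,\bigl\|\mu_{m+1}^\eps\phi^\mu_{m+1}B_m(\lambda)\bigr\|\,\|x\| \le Cc\,\|x\|_m,
\]
so~\eqref{eq:robustness-strong-dich-bounds-1} implies~\eqref{eq:robustness-dich-bounds-1} with constant $Cc$, and the same calculation reduces~\eqref{eq:robustness-strong-dich-bounds-2} to~\eqref{eq:robustness-dich-bounds-2} with constant $Cd$. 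For $c$ sufficiently small, Theorem~\ref{teo:robustness-dich-bounds} then produces a $\mu$-dichotomy of $(A_m+B_m(\lambda))$ with respect to $(\|\cdot\|_m)$ for every $\lambda\in I$, and, under the additional assumption~\eqref{eq:robustness-strong-dich-bounds-2}, projections $P_{m,\lambda}$ depending locally Lipschitz on $\lambda$.

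To close the loop and produce a \emph{strong} nonuniform $\mu$-dichotomy I invoke the converse direction $2 \Rightarrow 1$ of Theorem~\ref{teo:equivalence-strong}. Condition~\eqref{teo:equivalence-strong-1} is inherited unchanged from the sequence of norms built in the first step, so only the uniform bound~\eqref{teo:equivalence-strong-2} on the perturbed evolution family $\cC^\lambda_{m,n}$ remains to be checked. But this is furnished directly by Lemma~\ref{lemma:bound-growth-norm-cC}, proved inside Theorem~\ref{teo:robustness-dich-bounds}, which, once $c$ is small enough, delivers a uniform bound $\|\cC^\lambda_{m,n}x\|_m \le N(\mu_m/\mu_n)^{\alpha}\|x\|_n$ for $m\ge n$ and some $\alpha>0$. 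Applying $2 \Rightarrow 1$ of Theorem~\ref{teo:equivalence-strong} then upgrades the conclusion to a strong nonuniform $\mu$-dichotomy for $(A_m+B_m(\lambda))$, and the local Lipschitz dependence of the projections carries over from the previous step unchanged.

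The only delicate point is the bookkeeping of weights: one has to recognise that the factor $\mu_{m+1}^\eps$ in~\eqref{eq:robustness-strong-dich-bounds-1}--\eqref{eq:robustness-strong-dich-bounds-2} is exactly the price paid to pass from $\|\cdot\|$-bounds on $B_m(\lambda)$ to $\|\cdot\|_m$-bounds, and that Lemma~\ref{lemma:bound-growth-norm-cC}, although presented as an intermediate technical step of the previous theorem, furnishes precisely the extra growth estimate demanded by the strong direction of Theorem~\ref{teo:equivalence-strong}. Once these two matchings are noted, no further analysis is needed.
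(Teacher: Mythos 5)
Your proposal is correct and follows essentially the same route as the paper: pass to the adapted norms via Theorem~\ref{teo:equivalence-strong}, convert the weighted operator-norm smallness conditions into the $\|\cdot\|_m$-conditions of Theorem~\ref{teo:robustness-dich-bounds} using the norm comparison, apply that theorem, and then upgrade back to a strong nonuniform $\mu$-dichotomy using the growth bound from Lemma~\ref{lemma:bound-growth-norm-cC}. The only cosmetic difference is that the paper verifies the strong bound on $\|\cC^\lambda_{m,n}\|$ directly from the norm inequalities rather than formally citing the converse direction of Theorem~\ref{teo:equivalence-strong}, which amounts to the same computation.
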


\begin{proof}
Since $(A_m)_{m \in \N}$ admits a strong nonuniform $\mu$-dichotomy, Theorem~\ref{teo:equivalence-strong} shows that there are $C,M,a>0$ and $\eps\ge 0$ such that the sequence $(A_m)_{m\in \N}$ admits a nonuniform $\mu$-dichotomy with respect to a sequence of norms $\|\cdot\|_n$ satisfying, for $x \in X$,
\begin{equation}\label{eq:aux-equivalencia-normas}
\|x\| \le \|x\|_m\le C \mu_m^\eps \|x\|, \quad m \in \N
\end{equation}
and
$$\|\cA_{m,n}x\|_m \le M (\mu_n/\mu_m)^a \|x\|_n, \quad m \ge n.$$
Note that the constant $\eps>0$ can be taken to be the exponent $\eps$ in the definition of nonuniform $\mu$-dichotomy. We will consider in what follows that $\eps>0$ is that exponent.

Using~\eqref{eq:aux-equivalencia-normas} and~\eqref{eq:robustness-strong-dich-bounds-1}, we have
\begin{equation}\label{eq:robustness-cond-strong-dich-from-dich}
\begin{split}
\|\phi_{m+1}^\mu B_m(\lambda)x\|_{m+1}
& \le C\mu_{m+1}^\eps \|\phi_{m+1}^\mu B_m(\lambda) x\|\\
& = C \|\mu_{m+1}^\eps \phi_{m+1}^\mu B_m(\lambda) x\|\\
& \le cC \|x\| \\
& \le cC \|x\|_m,
\end{split}
\end{equation}
and we conclude that assumption~\eqref{eq:robustness-dich-bounds-1} in Theorem~\ref{teo:robustness-dich-bounds} is fulfilled in the  present context.
Thus, Theorem~\ref{teo:robustness-dich-bounds} shows that for sufficiently small $c>0$ the sequence $(A_m+B_m(\lambda))_{m \in \N}$ admits a nonuniform $\mu$-dichotomy with respect to the sequence of norms $\|\cdot\|_m$ for each $\lambda \in I$.

Using~\eqref{eq:aux-equivalencia-normas} we obtain
\[
\|\cC^\lambda_{m,n}x\|\le \|\cC^\lambda_{m,n}x\|_m \le N\left(\frac{\mu_k}{\mu_m}\right)^{\lambda+\delta} \|x\|_n \le
NC \left(\frac{\mu_k}{\mu_m}\right)^{\lambda+\delta} \mu_m^\eps \|x\|
\]
and thus $\|\cC^\lambda_{m,n}\|\le D \left(\frac{\mu_k}{\mu_m}\right)^b \mu_m^\eps$ with $D=NC$ and $b=\lambda+\delta$. We conclude that the sequence $(A_m+B_m(\lambda))_{m \in \N}$ admits a strong nonuniform $\mu$-dichotomy.

Finally, using~\eqref{eq:aux-equivalencia-normas}, we note that
\[
\begin{split}
\|\phi_{m+1}^\mu \left(B_m(\lambda)-B_m(\mu)\right)x\|_{m+1}
& \le C \mu_{m+1}^\eps \|\phi_{m+1}^\mu \left(B_m(\lambda)-B_m(\mu)\right)\| \|x\|\\
& \le C \|\mu_{m+1}^\eps\phi_{m+1}^\mu \left(B_m(\lambda)-B_m(\mu)\right)\| \|x\|\\
& \le Cd \|\lambda-\mu\| \|x\|\\
& \le Cd \|\lambda-\mu\| \|x\|_m
\end{split}
\]
and we conclude that assumption~\eqref{eq:robustness-dich-bounds-2} in Theorem~\ref{teo:robustness-dich-bounds} holds in the  present context. We conclude that we can choose the projections $P_{m,\lambda}$ in such a way that $\lambda \mapsto P_{m,\lambda}$ is locally Lipschitz.
The theorem follows.
\end{proof}

\section{Examples}

In this section we will apply our results to three particular families of growth rates: exponential, polynomial and logarithmic.
For polynomial and exponential growth rates we can see that our results include, as particular cases, existent results in the literature; to the best of our knowledge, the case of logarithmic growth rates is considered here for the first time.

\subsection{Exponential growth rate}

We begin with the case of the exponential growth rate $\eta=(\e^n)_{n \in \N}$. For this growth rate, condition~\eqref{eq:COND-minimal-growth} is satisfied with $q_n=n+1$ and $L_1=L_2=\e$ (in this case we get $\mu_{q_n}/\mu_n=\mu_{n+1}/\mu_n = \e$).  In the nonuniform exponential context, we obtain $\phi^\eta_n=(e-1)^{-1}$ and the linear operator $T^\eta_Z:(\cD(T^\eta_Z),\|\cdot\|_{T^\eta_Z}) \to Y_0$, is given by
$$(T^\eta_Z x)_1=0 \quad \text{ \and } \quad (T^\eta_Z x)_m=(\e-1)^{-1}(x_m-A_{m-1}x_{m-1}), \quad \ m \ge 1.$$
We have the following corollaries of Theorems~\ref{teo:invertibility} and~\ref{teo:exist-dich}. In corollaries~\ref{corollary-exp-1} and~\ref{corollary-exp-2} below we recover, respectively, Theorems 1 and 2 in~\cite{B.D.V.2016}.
\begin{corollary}[of Theorem~\ref{teo:invertibility}]\label{corollary-exp-1}
If the sequence $(A_m)_{m \in \N}$ admits a nonuniform exponential dichotomy with respect to a sequence of norms $(\|\cdot\|_m)_{m \in \N}$, then the operator $T^\mu_{\im Q_1}$ is invertible.
\end{corollary}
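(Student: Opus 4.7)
The plan is to recognise this corollary as a direct specialisation of Theorem~\ref{teo:invertibility} to the exponential growth rate $\eta = (\e^n)_{n \in \N}$, so the proof is essentially a translation exercise between the two sets of definitions. First I would verify that $\eta$ is a discrete growth rate in the sense of Section~2: it is strictly increasing and $\lim_{n \to +\infty} \e^n = +\infty$, so the hypothesis of Theorem~\ref{teo:invertibility} regarding $\mu$ is satisfied with $\mu = \eta$.

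Next I would match the dichotomy data. With $\mu_n = \e^n$ the ratio $\mu_m/\mu_n = \e^{m-n}$, so the inequalities
\[
\|\cA_{m,n}P_n x\|_m \le D\,\e^{-\lambda(m-n)}\|x\|_n \quad (m \ge n)
\]
and
\[
\|\cA_{m,n}Q_n x\|_m \le D\,\e^{-\lambda(n-m)}\|x\|_n \quad (m \le n)
\]
that constitute a nonuniform exponential dichotomy with respect to the sequence of norms are precisely the $\mu$-dichotomy inequalities \eqref{eq:dich-1} and \eqref{eq:dich-2} in the case $\mu = \eta$. Thus the hypothesis of Theorem~\ref{teo:invertibility} is met.

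I would then identify the operators. Since $\eta'_n = \e^{n+1} - \e^n = (\e-1)\e^n$, one gets $\phi^\eta_n = \eta_n/\eta'_n = 1/(\e-1)$, and substituting this in the definition \eqref{eq:def-TmuZ} recovers exactly the displayed formula
\[
(T^\eta_Z x)_1 = 0, \qquad (T^\eta_Z x)_m = (\e-1)^{-1}(x_m - A_{m-1}x_{m-1}), \quad m \ge 1,
\]
so the operator $T^\eta_{\im Q_1}$ of the corollary coincides with the instance $T^\mu_{\im Q_1}$ of Theorem~\ref{teo:invertibility} obtained by taking $\mu = \eta$ and $Z = \im Q_1$.

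With these identifications in place, Theorem~\ref{teo:invertibility} applies verbatim and yields the invertibility of $T^\eta_{\im Q_1}$. There is no genuine obstacle: the whole argument is hidden inside Theorem~\ref{teo:invertibility}, and the role of the corollary is merely to record the exponential specialisation and thereby recover Theorem~1 of~\cite{B.D.V.2016}.
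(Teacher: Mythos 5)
Your proposal is correct and matches the paper's intent exactly: the paper states this corollary without proof, as an immediate specialisation of Theorem~\ref{teo:invertibility} to the growth rate $\eta=(\e^n)_{n\in\N}$, and your verification that $\eta$ is a discrete growth rate, that $\mu_m/\mu_n=\e^{m-n}$ turns \eqref{eq:dich-1}--\eqref{eq:dich-2} into the exponential dichotomy estimates, and that $\phi^\eta_n=(\e-1)^{-1}$ recovers the displayed operator is precisely the routine check being left to the reader.
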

\begin{corollary}[of Theorem~\ref{teo:exist-dich}]\label{corollary-exp-2}
If $T^\eta_Z$ is an invertible operator for some closed subespace $Z \subset X$ and there are constants $\lambda,M>0$ such that
$$
\|\cA_{m,n}x\|_m\le M\e^{\lambda(m-n)}\|x\|_n,
$$
for all $m \ge n$ and $x \in X$, then $(A_m)_{m \in \N}$ admits a nonuniform exponential dichotomy with respect to the sequence of norms $(\|\cdot\|_m)_{m \in \N}$.
\end{corollary}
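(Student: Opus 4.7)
The plan is to derive this corollary as a direct specialization of Theorem~\ref{teo:exist-dich} to the growth rate $\eta=(\e^n)_{n\in\N}$. The argument consists in matching the hypotheses of that theorem to what is available here and reading off the conclusion.

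First I would verify that $\eta$ is a discrete growth rate, which is immediate since $\e^n$ is strictly increasing and tends to $+\infty$. Next I would check the minimal growth condition~\eqref{eq:COND-minimal-growth} by taking $q_n=n+1$: then $\mu_{q_n}/\mu_n=\e$ for every $n$, so any choice $L_1\in(1,\e]$ and $L_2\ge\e$ works, and the requirement $L_1,L_2>1$ is satisfied because $\e>1$.

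I would then observe that the hypothesis $\|\cA_{m,n}x\|_m\le M\e^{\lambda(m-n)}\|x\|_n$ coincides with~\eqref{eq:teo:exist-dich:exponential-bound} under the identification $(\mu_m/\mu_n)^\lambda=\e^{\lambda(m-n)}$. Combined with the invertibility of $T^\eta_Z=T^\mu_Z$ assumed in the statement, this fulfils every hypothesis of Theorem~\ref{teo:exist-dich}. Applying that theorem yields a $\mu$-dichotomy of $(A_m)_{m\in\N}$ with respect to the sequence of norms $(\|\cdot\|_m)_{m\in\N}$, and in the exponential case the bounds~\eqref{eq:dich-1}--\eqref{eq:dich-2} read
\[
\|\cA_{m,n}P_n x\|_m\le D\e^{-\lambda(m-n)}\|x\|_n \ (m\ge n),
\qquad
\|\cA_{m,n}Q_n x\|_m\le D\e^{-\lambda(n-m)}\|x\|_n \ (m\le n),
\]
which is precisely the definition of a nonuniform exponential dichotomy with respect to the sequence of norms.

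I expect no genuine obstacle here: the corollary is a pure verification of hypotheses, the only slightly delicate point being to ensure $L_1,L_2$ can be chosen strictly larger than $1$, which is automatic since $\mu_{n+1}/\mu_n=\e>1$. Note also that the constant $\lambda$ appearing in the hypothesis (a bound on growth) need not coincide with the dichotomy rate returned by Theorem~\ref{teo:exist-dich}; this causes no issue because the conclusion only asserts the existence of some dichotomy constants.
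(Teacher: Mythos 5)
Your proposal is correct and is exactly the route the paper takes: the corollary is obtained by specializing Theorem~\ref{teo:exist-dich} to the growth rate $\eta=(\e^n)_{n\in\N}$, checking condition~\eqref{eq:COND-minimal-growth} with $q_n=n+1$ and $L_1=L_2=\e$, and noting that the hypothesis $\|\cA_{m,n}x\|_m\le M\e^{\lambda(m-n)}\|x\|_n$ is~\eqref{eq:teo:exist-dich:exponential-bound} since $(\mu_m/\mu_n)^\lambda=\e^{\lambda(m-n)}$. No gaps.
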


Note that it is immediate that Corollaries~\ref{cor:exponential1} and~\ref{cor:exponential2} still hold if the linear operator $T^\eta_Z$ is replaced by the linear operator
$\tilde{T}^\eta_Z:(\cD(T^\eta_Z),\|\cdot\|_{\tilde{T}^\eta_Z}) \to Y_0$, given by
$$(\tilde{T}^\eta_Z x)_1=0 \quad \text{ \and } \quad (\tilde{T}^\eta_Z x)_m=x_m-A_{m-1}x_{m-1}, \quad \ m \ge 1,$$
and thus our results correspond to Theorems --- in~\cite{B.D.V.AH.2018}.

Theorem~\ref{teo:equivalence} becomes in the nonuniformly exponential context:

\begin{corollary}[of Theorem~\ref{teo:equivalence}]
The following properties are equivalent:
\begin{enumerate}[1.]
\item The sequence $(A_m)_{m\in \N}$ admits a nonuniform  exponential dichotomy;
\item There is $C>0$ and $\eps\ge 0$ such that the sequence $(A_m)_{m\in \N}$ admits a nonuniform exponential dichotomy with respect to a sequence of norms $\|\cdot\|_n$ satisfying, for $x \in X$ and $m \in \N$,
$$\|x\| \le \|x\|_m\le C \e^{\eps m} \|x\|.$$
\end{enumerate}
\end{corollary}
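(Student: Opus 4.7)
The plan is to derive this corollary as a direct specialization of Theorem~\ref{teo:equivalence} to the exponential growth rate $\eta = (\e^n)_{n \in \N}$. The first step is to observe that for this particular choice, the ratios that appear in the definitions of nonuniform $\mu$-dichotomy and $\mu$-dichotomy with respect to a sequence of norms simplify as
\[
(\mu_m/\mu_n)^{-\lambda} = \e^{-\lambda(m-n)}, \qquad \mu_n^\eps = \e^{\eps n}.
\]
Under these substitutions, the bounds \eqref{eq:dich-1}--\eqref{eq:dich-2} (with respect to a sequence of norms) and \eqref{eq:nonun-dich-1}--\eqref{eq:nonun-dich-2} are exactly the standard bounds defining a nonuniform exponential dichotomy and a nonuniform exponential dichotomy with respect to a sequence of norms, respectively. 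Consequently the notions of $\mu$-dichotomy and nonuniform $\mu$-dichotomy relative to the growth rate $\eta$ coincide with their exponential counterparts.

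The second step is to translate the norm-equivalence condition \eqref{eq:teo:equivalence-1} from Theorem~\ref{teo:equivalence}. With $\mu_m^\eps = \e^{\eps m}$, this condition reads precisely as the stated bound $\|x\| \le \|x\|_m \le C\e^{\eps m} \|x\|$, for all $x \in X$ and $m \in \N$. Therefore, the equivalence between items (1) and (2) in the corollary is the restriction to $\mu = \eta$ of the equivalence already established in Theorem~\ref{teo:equivalence}, and no additional argument is required.

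Since every computation in the proof of Theorem~\ref{teo:equivalence} (construction of the adapted norm \eqref{eq:seq-of-norms}, verification of the dichotomic estimates, and the converse direction using the comparison of norms) goes through verbatim with $\mu_k = \e^k$, there is no real obstacle to overcome here. The only minor sanity check is to confirm that the sequence $(\e^n)_{n \in \N}$ qualifies as a discrete growth rate in the sense of the paper (strictly increasing and tending to $+\infty$), which is immediate. Thus the entire proof reduces to a one-line invocation of Theorem~\ref{teo:equivalence}.
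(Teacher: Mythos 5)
Your proof is correct and matches the paper's intent exactly: the corollary is stated there without a separate proof precisely because it is the specialization of Theorem~\ref{teo:equivalence} to the growth rate $\eta=(\e^n)_{n\in\N}$, with $(\mu_m/\mu_n)^{-\lambda}=\e^{-\lambda(m-n)}$ and $\mu_m^\eps=\e^{\eps m}$. Your sanity check that $(\e^n)_{n\in\N}$ is a discrete growth rate is the only verification needed, and it is immediate.
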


\begin{corollary}[of Theorem~\ref{teo:robustness-dich-bounds}]
Assume that $(A_m)_{m \in \N}$ admits a nonuniform exponential dichotomy with respect to the sequence of norms $(\|\cdot\|_m)_{m \in \N}$ and that there are constants $\lambda,M>0$ such that, for all $m \ge n$ and $x \in X$,
$$\|\cA_{m,n} x\|_m \le M \e^{\lambda(m-n)}\|x\|_n.$$
Assume further that, for all $m \in \N$, $x \in X$ and $\lambda,\mu \in I$, there is $c>0$ such that
$$\|B_m(\lambda)x\|_{m+1} \le c\|x\|_m.$$
Then, if $c>0$ is sufficiently small, the sequence $(A_m+B_m(\lambda))_{m \in \N}$ admits a nonuniform $\mu$-dichotomy with respect to the sequence of norms $\|\cdot\|_m$ for each $\lambda \in I$. Additionally, if there is $d>0$ such that, for all $m \in \N$, $x \in X$ and $\lambda,\mu \in I$, we have
$$\|\left(B_m(\lambda)-B_m(\mu)\right)x\|_{m+1} \le d\|\lambda-\mu\| \, \|x\|_m,$$
we can choose the projections $P_{m,\lambda}$ in such a way that $\lambda \mapsto P_{m,\lambda}$ is locally Lipschitz.
\end{corollary}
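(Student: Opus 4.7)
The statement is a direct specialization of Theorem~\ref{teo:robustness-dich-bounds} to the exponential growth rate $\eta=(\e^n)_{n\in\N}$, so the plan is simply to verify that the hypotheses of the general robustness theorem are met in this setting and invoke it.

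First I would recall, exactly as in the discussion preceding Corollary~\ref{corollary-exp-1}, that for $\eta_n=\e^n$ condition~\eqref{teo:robustness-dich-bounds-a1} holds trivially with $q_n=n+1$ and $L_1=L_2=\e$, since $\mu_{q_n}/\mu_n=\e$. Next, a direct computation gives $\phi^\eta_{m+1}=\mu_{m+1}/(\mu_{m+2}-\mu_{m+1})=1/(\e-1)$, so $\phi^\eta_{m+1}$ is a constant independent of $m$. Consequently, the smallness hypothesis $\|B_m(\lambda)x\|_{m+1}\le c\|x\|_m$ of the corollary is equivalent to
\[
\|\phi^\eta_{m+1}B_m(\lambda)x\|_{m+1}\le \frac{c}{\e-1}\|x\|_m,
\]
which is precisely condition~\eqref{eq:robustness-dich-bounds-1} in Theorem~\ref{teo:robustness-dich-bounds} with the constant $c/(\e-1)$ in place of $c$. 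The growth bound $\|\cA_{m,n}x\|_m\le M\e^{\lambda(m-n)}\|x\|_n$ can be rewritten as $\|\cA_{m,n}x\|_m\le M(\mu_m/\mu_n)^\lambda\|x\|_n$, matching~\eqref{teo:robustness-dich-bounds-a2}.

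With all hypotheses verified, Theorem~\ref{teo:robustness-dich-bounds} applied to $\mu=\eta$ yields that, provided $c/(\e-1)$ (hence $c$) is small enough, the perturbed sequence $(A_m+B_m(\lambda))_{m\in\N}$ admits a nonuniform $\eta$-dichotomy with respect to $(\|\cdot\|_m)$, which by definition is a nonuniform exponential dichotomy with respect to that sequence of norms. For the second part, the Lipschitz hypothesis $\|(B_m(\lambda)-B_m(\mu))x\|_{m+1}\le d\|\lambda-\mu\|\,\|x\|_m$ translates, upon multiplication by the constant $\phi^\eta_{m+1}=1/(\e-1)$, into~\eqref{eq:robustness-dich-bounds-2} with the constant $d/(\e-1)$. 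Then the locally Lipschitz dependence of the projections $P_{m,\lambda}$ on $\lambda$ follows directly from the corresponding conclusion of Theorem~\ref{teo:robustness-dich-bounds}.

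There is essentially no obstacle here beyond bookkeeping: the only thing to check is that the constant factor $1/(\e-1)$ coming from $\phi^\eta$ can be absorbed into the smallness constants $c$ and $d$, which is immediate since the conclusions of Theorem~\ref{teo:robustness-dich-bounds} require only that these constants be sufficiently small.
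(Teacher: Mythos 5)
Your proposal is correct and is exactly the intended argument: the paper offers no separate proof for this corollary, treating it as a direct specialization of Theorem~\ref{teo:robustness-dich-bounds} to the growth rate $\eta_n=\e^{n}$, with $q_n=n+1$, $L_1=L_2=\e$ and the constant factor $\phi^\eta_{m+1}=1/(\e-1)$ absorbed into the smallness constants $c$ and $d$. Your verification of the hypotheses, including the identification $\e^{\lambda(m-n)}=(\mu_m/\mu_n)^{\lambda}$, matches the paper's setup precisely.
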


\begin{corollary}[of Theorem~\ref{teo:robustness-strong-dich-bounds}]
Assume that $(A_m)_{m \in \N}$ admits a strong nonuniform exponential dichotomy and that, for all $m \in \N$ and $\lambda,\mu \in I$, there is $c>0$ such that
$$\|B_m(\lambda)\| \le c\e^{-\eps m},$$
where $\eps>0$ is the exponent in the definition of strong nonuniform exponential dichotomy.
Then, if $c>0$ is sufficiently small, the sequence $(A_m+B_m(\lambda))_{m \in \N}$ admits a strong nonuniform exponential dichotomy for each $\lambda \in I$. Additionally, if there is $d>0$ such that
$$
\|B_m(\lambda)-B_m(\mu)\| \le d\e^{-m\eps}|\lambda-\mu|,
$$
for all $m \in \N$ and $\lambda,\mu \in I$, we can choose the projections $P_{m,\lambda}$ in such a way that $\lambda \mapsto P_{m,\lambda}$ is locally Lipschitz.
\end{corollary}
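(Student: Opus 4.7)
The plan is to deduce this corollary as a direct specialization of Theorem~\ref{teo:robustness-strong-dich-bounds} to the exponential growth rate $\eta=(\e^n)_{n\in\N}$. A strong nonuniform exponential dichotomy is, by definition, a strong nonuniform $\mu$-dichotomy with $\mu=\eta$, so the main work is simply to translate the abstract hypotheses~\eqref{teo:robustness-strong-dich-bounds-a1}, \eqref{eq:robustness-strong-dich-bounds-1}, and~\eqref{eq:robustness-strong-dich-bounds-2} into the concrete hypotheses of the corollary by computing $\phi^\eta_{m+1}$ and choosing an appropriate sequence $(q_n)$.

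First I would observe that, as noted in the discussion preceding Corollary~\ref{corollary-exp-1}, condition~\eqref{teo:robustness-strong-dich-bounds-a1} is satisfied for $\mu=\eta$ by taking $q_n=n+1$ with $L_1=L_2=\e$, since $\mu_{q_n}/\mu_n=\e^{n+1}/\e^n=\e$. Next I would compute
\[
\phi^\eta_{m+1}=\frac{\mu_{m+1}}{\mu_{m+2}-\mu_{m+1}}=\frac{\e^{m+1}}{\e^{m+1}(\e-1)}=\frac{1}{\e-1},
\]
which is a constant independent of $m$. Using the hypothesis $\|B_m(\lambda)\|\le c\e^{-\eps m}$, I then estimate
\[
\|\mu_{m+1}^\eps\phi^\eta_{m+1}B_m(\lambda)\|\le\frac{\e^{\eps(m+1)}}{\e-1}\cdot c\e^{-\eps m}=\frac{c\,\e^{\eps}}{\e-1},
\]
so that hypothesis~\eqref{eq:robustness-strong-dich-bounds-1} in Theorem~\ref{teo:robustness-strong-dich-bounds} holds with constant $c\,\e^\eps/(\e-1)$; this is as small as desired provided $c$ is small enough.

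Similarly, under the stronger Lipschitz hypothesis $\|B_m(\lambda)-B_m(\mu)\|\le d\e^{-m\eps}|\lambda-\mu|$, the same computation yields
\[
\|\mu_{m+1}^\eps\phi^\eta_{m+1}(B_m(\lambda)-B_m(\mu))\|\le\frac{d\,\e^{\eps}}{\e-1}|\lambda-\mu|,
\]
which is exactly hypothesis~\eqref{eq:robustness-strong-dich-bounds-2} in Theorem~\ref{teo:robustness-strong-dich-bounds} with constant $d\,\e^\eps/(\e-1)$. Applying Theorem~\ref{teo:robustness-strong-dich-bounds} now yields both conclusions: for sufficiently small $c$, the perturbed sequence $(A_m+B_m(\lambda))_{m\in\N}$ admits a strong nonuniform $\eta$-dichotomy (i.e., a strong nonuniform exponential dichotomy) for every $\lambda\in I$, and under the additional Lipschitz assumption the projections $P_{m,\lambda}$ may be chosen so that $\lambda\mapsto P_{m,\lambda}$ is locally Lipschitz.

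There is no real obstacle here: the content of the corollary is entirely contained in Theorem~\ref{teo:robustness-strong-dich-bounds}, and the proof is just a bookkeeping exercise verifying that the abstract assumptions reduce to the stated exponential ones. The only mild point to keep track of is that the constant $c$ appearing in the hypothesis of the corollary and the constant $c$ required to be small in Theorem~\ref{teo:robustness-strong-dich-bounds} differ by the factor $\e^{\eps}/(\e-1)$, but this is absorbed harmlessly into the smallness assumption.
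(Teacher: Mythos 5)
Your proposal is correct and follows exactly the route the paper intends: the paper offers no separate proof for this corollary, but states at the start of the exponential subsection precisely the ingredients you verify, namely that $q_n=n+1$ with $L_1=L_2=\e$ satisfies the growth condition and that $\phi^\eta_{m+1}=(\e-1)^{-1}$, so the hypotheses of Theorem~\ref{teo:robustness-strong-dich-bounds} reduce to the stated exponential bounds up to the harmless factor $\e^{\eps}/(\e-1)$.
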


\subsection{Polynomial growth rate}

Now we consider the case of polynomial growth rates: $p=(n)_{n \in \N}$. This case was already discussed by Dragi\v cevi\'c in~\cite{D.MN.2019} and his work was the departure point for several of our results. In this case, condition~\eqref{eq:COND-minimal-growth} is satisfied, for instance, with $q_n=2n+1$, $L_1=2$ and $L_2=3$ (in this case we get $\mu_{q_n}/\mu_n =(2n+1)/n = 2+1/n$).  In the nonuniform polynomial context, we have $\phi^\eta_n=n$ and the linear operator $T^p_Z:(\cD(T^p_Z),\|\cdot\|_{T^p_Z}) \to Y_0$, is given by
$$(T^p_Z x)_1=0 \quad \text{ \and } \quad (T^p_Z x)_m=n(x_m-A_{m-1}x_{m-1}), \quad \ m \ge 1$$
and coincides with the operator $T_Z$ introduced in~\cite{D.MN.2019}. We have the following corollaries of Theorems~\ref{teo:invertibility} and~\ref{teo:exist-dich} in the context of nonuniform polynomial behavior that coincide with Theorems 2 and 3 in~\cite{D.MN.2019}:
\begin{corollary}[of Theorem~\ref{teo:invertibility}]
If the sequence $(A_m)_{m \in \N}$ admits a nonuniform polynomial dichotomy with respect to a sequence of norms $(\|\cdot\|_m)_{m \in \N}$, then the operator $T^\mu_{\im Q_1}$ is invertible.
\end{corollary}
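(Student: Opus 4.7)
The plan is to apply Theorem~\ref{teo:invertibility} directly, with the generic growth rate $\mu$ specialized to the polynomial growth rate $p=(n)_{n\in\N}$. The first step is simply to verify that $p$ qualifies as a discrete growth rate in the sense defined at the beginning of Section~2, i.e.\ that the sequence $p_n=n$ is strictly increasing and satisfies $\lim_{n\to\infty} n=+\infty$. Both are immediate.

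Next, I would reconcile the definitions: a nonuniform polynomial dichotomy with respect to a sequence of norms is, by convention in the literature (and in particular in~\cite{D.MN.2019}), the notion obtained from the general definition of $\mu$-dichotomy with respect to a sequence of norms in Section~2 by setting $\mu_n=n$. Under this identification, the bounds $\|\cA_{m,n}P_n x\|_m\le D(\mu_m/\mu_n)^{-\lambda}\|x\|_n$ and $\|\cA_{m,n}Q_n x\|_m \le D(\mu_n/\mu_m)^{-\lambda}\|x\|_n$ from \eqref{eq:dich-1}--\eqref{eq:dich-2} specialize to the familiar $(m/n)^{\mp\lambda}$-bounds that characterize nonuniform polynomial dichotomies. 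The associated sequence $\phi^p_n=\mu_n/(\mu_{n+1}-\mu_n)=n/1=n$ coincides with that used to define the operator $T_Z$ in~\cite{D.MN.2019}, so $T^p_{\im Q_1}$ in our notation matches the $T_{\im Q_1}$ studied there.

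Having made this identification, the hypotheses of Theorem~\ref{teo:invertibility} are met verbatim for $\mu=p$, and the theorem yields the invertibility of $T^p_{\im Q_1}$, which is the conclusion of the corollary.

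I do not expect any substantive obstacle in this argument: the whole point of formulating Theorem~\ref{teo:invertibility} for an arbitrary discrete growth rate was precisely to subsume the polynomial (and other) cases as immediate corollaries. The only place that requires a moment's care is the translation of terminology between "nonuniform polynomial dichotomy with respect to a sequence of norms" and the general $\mu$-dichotomy defined in Section~2, but this is essentially a matter of rewriting $(\mu_m/\mu_n)$ as $(m/n)$ and checking that the projections, invariance property and invertibility of $A_m|_{\ker P_m}$ are required in both formulations.
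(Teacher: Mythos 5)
Your proposal is correct and matches the paper's intent exactly: the corollary is obtained by specializing Theorem~\ref{teo:invertibility} to the discrete growth rate $\mu_n=n$, noting that $\phi^p_n=n$ so that $T^p_{\im Q_1}$ coincides with the operator of~\cite{D.MN.2019}, and the paper offers no further argument beyond this identification. No gap.
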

and
\begin{corollary}[of Theorem~\ref{teo:exist-dich}]
If $T^\eta_Z$ is an invertible operator for a closed subespace $Z \subset X$ and there are constants $\lambda,M>0$ such that
$$
\|\cA_{m,n}x\|_m\le M(m/n)^\lambda\|x\|_n,
$$
for all $m \ge n$ and $x \in X$, then $(A_m)_{m \in \N}$ admits a nonuniform polynomial dichotomy with respect to the
sequence of norms $(\|\cdot\|_m)_{m \in \N}$.
\end{corollary}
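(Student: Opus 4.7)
The plan is to obtain this statement as a direct specialization of Theorem~\ref{teo:exist-dich} to the polynomial growth rate $\mu_n=n$. The only work is to match the abstract hypotheses of that theorem against the ones given in the corollary.

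First, I would fix $\mu=p=(n)_{n\in\N}$ and exhibit an increasing sequence $(q_n)$ together with constants $L_1,L_2>1$ for which~\eqref{eq:COND-minimal-growth} holds. Taking $q_n=2n+1$ gives $\mu_{q_n}/\mu_n = (2n+1)/n = 2+1/n$, which lies in $[2,3]$ for every $n\in\N$; so $L_1=2$ and $L_2=3$ work. This verifies the structural condition on the growth rate required by Theorem~\ref{teo:exist-dich}.

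Next, the hypothesis of the corollary states that $\|\cA_{m,n}x\|_m\le M(m/n)^\lambda\|x\|_n$ for all $m\ge n$; with $\mu_n=n$ this is literally $\|\cA_{m,n}x\|_m\le M(\mu_m/\mu_n)^\lambda\|x\|_n$, i.e.\ exactly~\eqref{eq:teo:exist-dich:exponential-bound}. Combined with the assumed invertibility of $T^p_Z$ (the operator $T^\mu_Z$ specialised to $\mu=p$), all the assumptions of Theorem~\ref{teo:exist-dich} are satisfied.

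Applying Theorem~\ref{teo:exist-dich} therefore yields that $(A_m)_{m\in\N}$ admits a $p$-dichotomy with respect to the sequence of norms $(\|\cdot\|_m)_{m\in\N}$, which by definition is a nonuniform polynomial dichotomy with respect to that sequence of norms. There is no real obstacle here: everything reduces to recognising the correspondence between the general growth-rate formalism and the polynomial case, so the proof is essentially a one-line invocation of Theorem~\ref{teo:exist-dich} after the verification of~\eqref{eq:COND-minimal-growth}.
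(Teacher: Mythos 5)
Your proposal is correct and is exactly the argument the paper intends: specialize Theorem~\ref{teo:exist-dich} to $\mu_n=n$, check~\eqref{eq:COND-minimal-growth} with $q_n=2n+1$, $L_1=2$, $L_2=3$, observe that the assumed bound is precisely~\eqref{eq:teo:exist-dich:exponential-bound}, and read off the conclusion (noting that the operator written $T^\eta_Z$ in the corollary is the polynomial operator $T^p_Z$). Nothing is missing.
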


Theorem~\ref{teo:equivalence} becomes in the nonuniformly polynomial context:

\begin{corollary}[of Theorem~\ref{teo:equivalence}]
The following properties are equivalent:
\begin{enumerate}[1.]
\item The sequence $(A_m)_{m\in \N}$ admits a nonuniform polynomial dichotomy;
\item There is $C>0$ and $\eps\ge 0$ such that the sequence $(A_m)_{m\in \N}$ admits a nonuniform polynomial dichotomy with respect to a sequence of norms $\|\cdot\|_n$ satisfying, for $x \in X$ and $m \in \N$,
$$\|x\| \le \|x\|_m\le C m^\eps \|x\|.$$
\end{enumerate}
\end{corollary}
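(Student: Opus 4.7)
The plan is to derive this corollary as a direct specialization of Theorem~\ref{teo:equivalence} to the discrete growth rate $p=(n)_{n \in \N}$. The first step is the (trivial) verification that $p$ qualifies as a discrete growth rate in the sense of Section~2: the sequence is strictly increasing and satisfies $\lim_{n \to +\infty} n = +\infty$.

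The second and only substantive step is to match the terminology between the general $\mu$-framework and the polynomial setting. With $\mu_n = n$, the dichotomy bounds in~\eqref{eq:nonun-dich-1} and~\eqref{eq:nonun-dich-2} take the form
\begin{equation*}
\|\cA_{m,n}P_n\| \le D\,(m/n)^{-\lambda}\,n^\eps \quad (m \ge n), \qquad \|\cA_{m,n}Q_n\| \le D\,(n/m)^{-\lambda}\,n^\eps \quad (m \le n),
\end{equation*}
which is exactly the definition of a nonuniform polynomial dichotomy employed in this subsection. Analogously, the norm equivalence $\|x\| \le \|x\|_m \le C\mu_m^\eps\|x\|$ of Theorem~\ref{teo:equivalence} specializes to the required $\|x\| \le \|x\|_m \le C m^\eps\|x\|$, and ``$\mu$-dichotomy with respect to a sequence of norms'' becomes ``polynomial dichotomy with respect to a sequence of norms'' in the obvious way.

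Once these identifications are in place, the equivalence $(1)\Leftrightarrow(2)$ is nothing more than Theorem~\ref{teo:equivalence} restated for this particular choice of growth rate, so I would simply invoke that theorem to conclude. I do not expect any genuine obstacle: this is a routine translation, and both directions of the proof of Theorem~\ref{teo:equivalence} carry over verbatim under the substitution $\mu_n \mapsto n$.
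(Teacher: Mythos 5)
Your proposal is correct and matches the paper's treatment exactly: the paper also presents this corollary as nothing more than Theorem~\ref{teo:equivalence} specialized to the growth rate $\mu_n=n$, with the only checks being that $(n)_{n\in\N}$ is a discrete growth rate and that the $\mu$-dichotomy notions reduce to the polynomial ones under this substitution. No further argument is needed, since Theorem~\ref{teo:equivalence} imposes no extra hypotheses (such as~\eqref{eq:COND-minimal-growth}) on the growth rate.
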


\begin{corollary}[of Theorem~\ref{teo:robustness-dich-bounds}]
Assume that $(A_m)_{m \in \N}$ admits a nonuniform polynomial dichotomy with respect to the sequence of norms $(\|\cdot\|_m)_{m \in \N}$ and that there are constants $\lambda,M>0$ such that, for all $m \ge n$ and $x \in X$,
$$
\|\cA_{m,n} x\|_m \le M (m/n)^\lambda \|x\|_n.
$$
Assume further that, for all $m \in \N$, $x \in X$ and $\lambda,\mu \in I$, there is $c>0$ such that
$$
\| B_m(\lambda)x\|_{m+1} \le \frac{c}{m+1} \, \|x\|_m.
$$
Then, if $c>0$ is sufficiently small, the sequence $(A_m+B_m(\lambda))_{m \in \N}$ admits a nonuniform polynomial dichotomy with respect to the sequence of norms $\|\cdot\|_m$ for each $\lambda \in I$. Additionally, if there is $d>0$ such that, for all $m \in \N$, $x \in X$ and $\lambda,\mu \in I$, we have
$$
\|\left(B_m(\lambda)-B_m(\mu)\right)x\|_{m+1} \le \frac{d}{m+1}\|\lambda-\mu\| \, \|x\|_m,
$$
we can choose the projections $P_{m,\lambda}$ in such a way that $\lambda \mapsto P_{m,\lambda}$ is locally Lipschitz.
\end{corollary}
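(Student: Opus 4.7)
The plan is to apply Theorem~\ref{teo:robustness-dich-bounds} to the polynomial growth rate $\mu_n = n$, for which $\mu'_n = 1$ and hence $\phi^\mu_n = n$, and then verify that, under this specialization, every hypothesis of the general theorem reduces to one that is either assumed in the corollary or has already been recorded earlier in this subsection.

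First I would note that the growth-rate condition \eqref{teo:robustness-dich-bounds-a1} is known: taking $q_n = 2n+1$, $L_1 = 2$, $L_2 = 3$ we have $\mu_{q_n}/\mu_n = 2 + 1/n \in [2,3]$ for every $n \in \N$, exactly as observed at the start of this subsection. The upper bound \eqref{teo:robustness-dich-bounds-a2} on $\|\cA_{m,n}\|_m$ is assumed verbatim.

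Next I would translate the two perturbation conditions. Since $\phi^\mu_{m+1} = m+1$, the inequality \eqref{eq:robustness-dich-bounds-1} becomes $\|(m+1)\,B_m(\lambda)x\|_{m+1} \le c\|x\|_m$, which is equivalent (after dividing by $m+1$) to the hypothesis $\|B_m(\lambda)x\|_{m+1} \le c/(m+1)\,\|x\|_m$ of the corollary. Likewise \eqref{eq:robustness-dich-bounds-2} becomes precisely the assumed Lipschitz bound $\|(B_m(\lambda)-B_m(\mu))x\|_{m+1} \le d/(m+1)\,\|\lambda-\mu\|\|x\|_m$. Thus, for $c$ sufficiently small, Theorem~\ref{teo:robustness-dich-bounds} delivers a nonuniform $\mu$-dichotomy for $(A_m + B_m(\lambda))_{m \in \N}$ with respect to $(\|\cdot\|_m)_{m \in \N}$ and, under the additional assumption, projections $P_{m,\lambda}$ depending locally Lipschitz on $\lambda$.

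Finally I would just observe that a nonuniform $\mu$-dichotomy with $\mu_n = n$ is, by definition, a \emph{nonuniform polynomial dichotomy} with respect to the sequence of norms, since the defining estimates~\eqref{eq:dich-1}--\eqref{eq:dich-2} become the familiar $(m/n)^{\pm\lambda}$ bounds. No serious obstacle is expected: the argument is essentially a specialization of notation, and the only point requiring a moment of care is the identification of $\phi^\mu_{m+1}$ with $m+1$, which in turn accounts for the extra factor of $1/(m+1)$ appearing in the stated perturbation bounds compared with the exponential corollary above.
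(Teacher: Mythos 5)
Your proposal is correct and takes essentially the same route as the paper, which presents this corollary as a direct specialization of Theorem~\ref{teo:robustness-dich-bounds} to the growth rate $\mu_n=n$, for which $\phi^\mu_{m+1}=m+1$ and the growth condition~\eqref{eq:COND-minimal-growth} holds with $q_n=2n+1$, $L_1=2$, $L_2=3$, exactly as you verify.
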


The following result corresponds to Theorem 5.1 in~\cite{D.MN.2019}. In fact, the result below generalizes Theorem 5.1 in~\cite{D.MN.2019} since the condition~\eqref{eq:robustness-strong-poly-dich-bounds-1} assumed here is weaker than the condition considered in that paper (that reads in our notation $\|B_m(\lambda)\| \le c/((m+1)^{\eps+2})$).

\begin{corollary}[of Theorem~\ref{teo:robustness-strong-dich-bounds}]
Assume that $(A_m)_{m \in \N}$ admits a strong nonuniform polynomial dichotomy and that, for all $m \in \N$ and $\lambda,\mu \in I$, there is $c>0$ such that
$$
\|B_m(\lambda)\| \le \frac{c}{(m+1)^{\eps+1}},
$$
where $\eps>0$ is the exponent in the definition of strong nonuniform $\mu$-dichotomy.
Then, if $c>0$ is sufficiently small, the sequence $(A_m+B_m(\lambda))_{m \in \N}$ admits a strong nonuniform polynomial dichotomy for each $\lambda \in I$. Additionally, if there is $d>0$ such that
$$\|\left(B_m(\lambda)-B_m(\mu)\right)\| \le \frac{d}{(m+1)^{\eps+1}}|\lambda-\mu|,$$
for all $m \in \N$ and $\lambda,\mu \in I$, we can choose the projections $P_{m,\lambda}$ in such a way that $\lambda \mapsto P_{m,\lambda}$ is locally Lipschitz.
\end{corollary}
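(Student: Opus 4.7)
The plan is to obtain this corollary by a direct specialization of Theorem~\ref{teo:robustness-strong-dich-bounds} to the polynomial growth rate $\mu_n=n$. First, I would record the constants associated with this growth rate: since $\mu_n=n$, we have $\mu'_n=\mu_{n+1}-\mu_n=1$ and therefore $\phi^\mu_{m+1}=\mu_{m+1}/\mu'_{m+1}=m+1$. Moreover, the minimal-growth assumption~\eqref{teo:robustness-strong-dich-bounds-a1} is already known to hold for $\mu_n=n$ with $q_n=2n+1$, $L_1=2$, $L_2=3$, as noted at the beginning of this subsection. Thus all hypotheses on $\mu$ in Theorem~\ref{teo:robustness-strong-dich-bounds} are automatically available here.

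Next, I would translate the perturbation hypotheses. Substituting the identities above, the smallness condition~\eqref{eq:robustness-strong-dich-bounds-1} of Theorem~\ref{teo:robustness-strong-dich-bounds} becomes
\[
\|\mu_{m+1}^\eps\,\phi_{m+1}^\mu\,B_m(\lambda)\|=(m+1)^{\eps+1}\,\|B_m(\lambda)\|\le c,
\]
which is precisely the hypothesis $\|B_m(\lambda)\|\le c/(m+1)^{\eps+1}$ in the statement. The same substitution turns~\eqref{eq:robustness-strong-dich-bounds-2} into the assumed Lipschitz bound on $\|B_m(\lambda)-B_m(\mu)\|$. With the hypotheses in the exact form required, the first half of Theorem~\ref{teo:robustness-strong-dich-bounds} yields, for $c$ sufficiently small, the persistence of a strong nonuniform polynomial dichotomy for $(A_m+B_m(\lambda))_{m\in\N}$, for every $\lambda\in I$; and the second half produces projections $P_{m,\lambda}$ depending locally Lipschitz on $\lambda$ when the $d$-bound is in force.

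I do not anticipate any real obstacle: the proof is a bookkeeping specialization of a general theorem. The one point that deserves to be highlighted at the end of the write-up is the remark already made in the text, namely that the factor $\mu_{m+1}^\eps\,\phi_{m+1}^\mu=(m+1)^{\eps+1}$ appearing in the abstract condition is exactly what allows us to weaken the decay requirement from $\|B_m(\lambda)\|\le c/(m+1)^{\eps+2}$, as in Theorem~5.1 of~\cite{D.MN.2019}, to $\|B_m(\lambda)\|\le c/(m+1)^{\eps+1}$; this genuine strengthening is a free consequence of the more careful normalization built into $T^\mu_Z$ via the sequence $\phi^\mu$.
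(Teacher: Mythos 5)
Your proposal is correct and follows exactly the route the paper intends: the corollary is the direct specialization of Theorem~\ref{teo:robustness-strong-dich-bounds} to the growth rate $\mu_n=n$, for which $\mu'_n=1$, $\phi^\mu_{m+1}=m+1$, hence $\mu_{m+1}^{\eps}\phi^\mu_{m+1}=(m+1)^{\eps+1}$, and condition~\eqref{eq:COND-minimal-growth} holds with $q_n=2n+1$, $L_1=2$, $L_2=3$. Your closing remark on why the exponent improves from $\eps+2$ to $\eps+1$ relative to Theorem~5.1 of~\cite{D.MN.2019} is also the comparison the paper itself makes.
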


\subsection{Logarithmic growth rate}

In our last family of examples we consider the case of logarithmic growth rates: $\nu=(\log(n+1))_{n \in \N}$. As far as we are aware, this case is discussed for the first time in the present work. For these growth rates, condition~\eqref{eq:COND-minimal-growth} is satisfied, for instance, with $q_n=(n+1)^2$, $L_1=2$ and $L_2=2+\log(5/4)/\log 2$ (in this case we get $\mu_{q_n}/\mu_n =\log((n+1)^2+1)/\log(n+1)$).  In this nonuniform logarithmic context, we have $\phi^\nu_n=\log(n+1)/\log(1+1/(n+1))$ and the linear operator $T^\nu_Z:(\cD(T^\nu_Z),\|\cdot\|_{T^\nu_Z}) \to Y_0$, is given by
$$(T^\nu_Z x)_1=0 \ \text{ \and } \ (T^\nu_Z x)_m=\log(n+1)/\log(1+1/(n+1))(x_m-A_{m-1}x_{m-1}), \ \ m \ge 1$$
We have the following corollaries of Theorems~\ref{teo:invertibility} and~\ref{teo:exist-dich} in this context:
\begin{corollary}[of Theorem~\ref{teo:invertibility}]
If the sequence $(A_m)_{m \in \N}$ admits a nonuniform logarithmic dichotomy with respect to a sequence of norms $(\|\cdot\|_m)_{m \in \N}$, then the operator $T^\nu_{\im Q_1}$ is invertible.
\end{corollary}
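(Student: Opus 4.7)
The plan is to obtain this corollary as a direct specialization of Theorem~\ref{teo:invertibility} to the growth rate $\nu_n = \log(n+1)$. First I would check that $\nu = (\log(n+1))_{n \in \N}$ qualifies as a discrete growth rate in the sense of Section~2, i.e.\ that it is strictly increasing and satisfies $\lim_{n\to+\infty}\nu_n = +\infty$; both properties are immediate. Next, I would observe that the phrase ``nonuniform logarithmic dichotomy with respect to a sequence of norms'' is by definition just the notion of $\mu$-dichotomy with respect to a sequence of norms introduced earlier, with $\mu$ taken to be $\nu$; in particular, the operator $T^\nu_{\im Q_1}$ appearing in the statement is the operator $T^\mu_{\im Q_1}$ of Theorem~\ref{teo:invertibility} with $\mu = \nu$ and $\phi^\nu_n = \log(n+1)/\log(1+1/(n+1))$ as computed in the preamble of this subsection.

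Under these identifications, the hypothesis of the corollary matches exactly the hypothesis of Theorem~\ref{teo:invertibility}, and the conclusion of that theorem gives invertibility of $T^\nu_{\im Q_1}$, as desired. There is no real obstacle here, since all of the substantive work, including the estimates of Lemma~\ref{prop:ESTIMATES} used to bound the telescoping sums that appear in the surjectivity and injectivity arguments, has been carried out in full generality in the proof of Theorem~\ref{teo:invertibility}. The only role of this corollary is to make explicit that the main invertibility theorem automatically covers the logarithmic case, so the ``proof'' is essentially the single sentence ``apply Theorem~\ref{teo:invertibility} with $\mu = \nu$''.
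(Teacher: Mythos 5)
Your proposal is correct and coincides with what the paper intends: the corollary is a direct instantiation of Theorem~\ref{teo:invertibility} with $\mu=\nu=(\log(n+1))_{n\in\N}$, which is indeed a discrete growth rate, and no extra hypotheses (such as condition~\eqref{eq:COND-minimal-growth}, needed only for Theorem~\ref{teo:exist-dich}) are required. The identification of $T^\nu_{\im Q_1}$ and of $\phi^\nu_n=\log(n+1)/\log(1+1/(n+1))$ matches the paper's preamble to this subsection, so nothing further is needed.
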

and
\begin{corollary}[of Theorem~\ref{teo:exist-dich}]
If $T^\nu_Z$ is an invertible operator for a closed subespace $Z \subset X$ and there are constants $\lambda,M>0$ such that
$$
\|\cA_{m,n}x\|_m\le M\left(\frac{\log(m+2)}{\log(n+2)}\right)^\lambda\|x\|_n,
$$
for all $m \ge n$ and $x \in X$, then $(A_m)_{m \in \N}$ admits a nonuniform logarithmic dichotomy with respect to the
sequence of norms $(\|\cdot\|_m)_{m \in \N}$.
\end{corollary}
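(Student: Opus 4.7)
The plan is to obtain this as an immediate corollary of Theorem~\ref{teo:exist-dich}, specialized to the logarithmic growth rate $\nu_n=\log(n+1)$ recalled at the start of this subsection; the $\log(m+2)$ appearing in the statement is just a harmless indexing shift. Three of the four hypotheses of Theorem~\ref{teo:exist-dich} are already in hand: the sequence $\nu$ is plainly a discrete growth rate (strictly increasing, tending to $+\infty$), the invertibility of $T^\nu_Z$ is a standing assumption, and the growth estimate~\eqref{eq:teo:exist-dich:exponential-bound} is precisely the bound assumed in the statement (with $\mu_m/\mu_n=\log(m+1)/\log(n+1)$ after the indexing shift). The whole work therefore reduces to verifying the minimal growth condition~\eqref{eq:COND-minimal-growth} for $\nu$.

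For that verification I would adopt the choice indicated at the opening of the subsection, namely $q_n=(n+1)^2$, so that
\[
\frac{\mu_{q_n}}{\mu_n}=\frac{\log((n+1)^2+1)}{\log(n+1)}.
\]
The lower bound with $L_1=2$ is immediate from $\log((n+1)^2+1)\ge 2\log(n+1)$. For the upper bound I would split $\log((n+1)^2+1)=2\log(n+1)+\log\bigl(1+(n+1)^{-2}\bigr)$, bound the second summand by $\log(5/4)$ uniformly in $n\ge 1$, and use $\log(n+1)\ge\log 2$ to obtain $L_2=2+\log(5/4)/\log 2$. This matches the constants announced at the start of the subsection.

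Once~\eqref{eq:COND-minimal-growth} is established with these $q_n$, $L_1$, $L_2$, Theorem~\ref{teo:exist-dich} applies verbatim and delivers the asserted nonuniform $\nu$-dichotomy with respect to the sequence of norms $(\|\cdot\|_m)_{m\in\N}$. There is no substantive obstacle: the only step requiring a moment of care is the uniform logarithmic comparison above, since $\log(n+1)$ grows very slowly and one must ensure the $L_2$ estimate genuinely holds for every $n\ge 1$ rather than only asymptotically.
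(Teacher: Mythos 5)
Your proposal is correct and follows exactly the route the paper intends: the corollary is a direct application of Theorem~\ref{teo:exist-dich} once condition~\eqref{eq:COND-minimal-growth} is checked for the logarithmic rate, and your verification with $q_n=(n+1)^2$, $L_1=2$, $L_2=2+\log(5/4)/\log 2$ reproduces the computation announced at the start of the subsection. The only point worth a word more than you gave it is the shift from $\log(n+1)$ to $\log(n+2)$, but since $\log(m+2)/\log(n+2)\le\log(m+1)/\log(n+1)$ for $m\ge n$ and the two rates are uniformly comparable, it is indeed harmless.
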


Theorem~\ref{teo:equivalence} becomes in the nonuniformly logarithmic context:

\begin{corollary}[of Theorem~\ref{teo:equivalence}]
The following properties are equivalent:
\begin{enumerate}[1.]
\item The sequence $(A_m)_{m\in \N}$ admits a nonuniform logarithmic dichotomy;
\item There is $C>0$ and $\eps\ge 0$ such that the sequence $(A_m)_{m\in \N}$ admits a nonuniform logarithmic dichotomy with respect to a sequence of norms $\|\cdot\|_n$ satisfying, for $x \in X$ and $m \in \N$,
$$\|x\| \le \|x\|_m\le C (\log(m+2))^\eps \|x\|.$$
\end{enumerate}
\end{corollary}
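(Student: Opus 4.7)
The plan is to obtain this equivalence as an immediate specialization of Theorem~\ref{teo:equivalence} to the logarithmic growth rate $\mu_m=\log(m+2)$. First I would verify that $\mu=(\log(m+2))_{m \in \N}$ is a discrete growth rate in the sense adopted in Section~2: it is strictly increasing since $\log$ is strictly increasing, and $\log(m+2)\to +\infty$ as $m \to +\infty$. With this checked, the abstract notion of nonuniform $\mu$-dichotomy instantiates to precisely the definition of nonuniform logarithmic dichotomy used in this subsection, and likewise for the version with respect to a sequence of norms.

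Once the identification of growth rates is in place, Theorem~\ref{teo:equivalence} states that $(A_m)_{m \in \N}$ admits a nonuniform $\mu$-dichotomy if and only if there exist $C>0$ and $\eps\ge 0$ such that $(A_m)_{m \in \N}$ admits a $\mu$-dichotomy with respect to a sequence of norms $(\|\cdot\|_m)_{m \in \N}$ satisfying
\[
\|x\| \le \|x\|_m \le C\mu_m^\eps \|x\|,\qquad x\in X,\ m\in \N.
\]
Substituting $\mu_m = \log(m+2)$ in the right-hand bound produces exactly $\|x\|\le \|x\|_m \le C(\log(m+2))^\eps \|x\|$, which is the inequality appearing in condition~2 of the corollary. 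Hence both directions of the equivalence follow at once: the implication $(1) \Rightarrow (2)$ by building the sequence of norms via formula~\eqref{eq:seq-of-norms} in the proof of Theorem~\ref{teo:equivalence}, and the implication $(2) \Rightarrow (1)$ by the straightforward estimates comparing $\|\cdot\|$ and $\|\cdot\|_m$ recorded there.

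I do not expect a genuine obstacle, since the argument is purely a translation of the general theorem to a concrete growth rate; the only point requiring a moment of care is checking that $(\log(m+2))_{m \in \N}$ meets the running hypotheses, and that the exponent $\eps$ appearing in the abstract bound can indeed be identified with the nonuniformity exponent of the logarithmic dichotomy, exactly as in the exponential and polynomial corollaries established earlier in this section.
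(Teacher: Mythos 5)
Your proposal is correct and matches the paper's approach exactly: the paper treats this corollary as an immediate specialization of Theorem~\ref{teo:equivalence} to the logarithmic growth rate, with no additional argument needed since that theorem imposes no extra hypotheses (such as \eqref{eq:COND-minimal-growth}) on the growth rate. The only point of care you identify — that $(\log(m+2))_{m\in\N}$ is strictly increasing and unbounded, hence a discrete growth rate — is indeed all that needs checking.
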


\begin{corollary}[of Theorem~\ref{teo:robustness-dich-bounds}]
Assume that $(A_m)_{m \in \N}$ admits a nonuniform logarithmic dichotomy with respect to the sequence of norms $(\|\cdot\|_m)_{m \in \N}$ and that there are constants $\lambda,M>0$ such that, for all $m \ge n$ and $x \in X$,
$$\|\cA_{m,n} x\|_m \le M (\ln(m+2)/\ln(n+2))^\lambda\|x\|_n.$$
Assume further that, for all $m \in \N$, $x \in X$ and $\lambda,\mu \in I$, there is $c>0$ such that
$$\|B_m(\lambda)x\|_{m+1} \le c\frac{\ln((m+2)/(m+1))}{\ln(m+2)}\|x\|_m.$$
Then, if $c>0$ is sufficiently small, the sequence $(A_m+B_m(\lambda))_{m \in \N}$ admits a nonuniform logarithmic dichotomy with respect to the sequence of norms $\|\cdot\|_m$ for each $\lambda \in I$. Additionally, if there is $d>0$ such that, for all $m \in \N$, $x \in X$ and $\lambda,\mu \in I$, we have
$$
\|\left(B_m(\lambda)-B_m(\mu)\right)x\|_{m+1} \le d\frac{\ln((m+2)/(m+1))}{\ln(m+2)} \|\lambda-\mu\| \, \|x\|_m,
$$
we can choose the projections $P_{m,\lambda}$ in such a way that $\lambda \mapsto P_{m,\lambda}$ is locally Lipschitz.
\end{corollary}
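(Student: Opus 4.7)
The plan is to recognise this corollary as a direct specialisation of Theorem~\ref{teo:robustness-dich-bounds} to the logarithmic growth rate $\nu$ introduced at the start of this subsection, and simply to verify that each hypothesis of that general theorem is met in the present setting.

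First I would dispose of the two structural hypotheses of Theorem~\ref{teo:robustness-dich-bounds} that are not repeated explicitly. The minimal-growth condition~\eqref{eq:COND-minimal-growth} has already been checked at the start of this subsection with $q_n=(n+1)^2$, $L_1=2$ and $L_2=2+\log(5/4)/\log 2$, and the upper bound~\eqref{teo:robustness-dich-bounds-a2} is nothing other than the displayed inequality assumed in the corollary, once $\mu_m/\mu_n$ is read as the logarithmic ratio $\log(m+2)/\log(n+2)$ in the convention fixed for this subsection.

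Next I would translate the two perturbation hypotheses into~\eqref{eq:robustness-dich-bounds-1} and~\eqref{eq:robustness-dich-bounds-2}. A direct computation gives
\[
\phi^\nu_{m+1} = \frac{\log(m+2)}{\log((m+2)/(m+1))},
\]
so multiplying the corollary's bound on $\|B_m(\lambda)x\|_{m+1}$ through by $\phi^\nu_{m+1}$ yields exactly $\|\phi^\nu_{m+1} B_m(\lambda)x\|_{m+1}\le c\|x\|_m$, and the same manipulation turns the second perturbation bound into~\eqref{eq:robustness-dich-bounds-2}. Theorem~\ref{teo:robustness-dich-bounds} then applies verbatim: for sufficiently small $c$, the perturbed sequence $(A_m+B_m(\lambda))_{m \in \N}$ admits a $\nu$-dichotomy with respect to the same sequence of norms for every $\lambda\in I$, and under the second bound the projections depend locally Lipschitzly on $\lambda$. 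Unwinding the definition of $\nu$-dichotomy produces precisely the claimed nonuniform logarithmic dichotomy.

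I do not foresee any genuine obstacle, as the entire argument is a mechanical specialisation. The only point demanding a bit of care is the arithmetic identity for $\phi^\nu_{m+1}$ above, which must be reconciled with the indexing conventions used throughout the subsection; once that identity is in place, no new estimates are needed beyond what Theorem~\ref{teo:robustness-dich-bounds} already provides.
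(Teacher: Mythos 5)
Your proof is correct and is exactly the intended derivation: the corollary is a mechanical specialisation of Theorem~\ref{teo:robustness-dich-bounds} to $\nu_n=\log(n+1)$, with condition~\eqref{eq:COND-minimal-growth} already verified at the start of the subsection and the perturbation bounds translating into~\eqref{eq:robustness-dich-bounds-1} and~\eqref{eq:robustness-dich-bounds-2} after multiplication by $\phi^\nu_{m+1}$. One small caveat on the arithmetic you flag: with the paper's convention $\mu'_n=\mu_{n+1}-\mu_n$ one actually gets $\phi^\nu_{m+1}=\log(m+2)/\log((m+3)/(m+2))$ rather than $\log(m+2)/\log((m+2)/(m+1))$, but this off-by-one (already present in the corollary's own statement) is harmless, since the two logarithmic increments differ by at most a factor of $2$, which is absorbed into the requirement that $c$ (respectively $d$) be sufficiently small.
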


\begin{corollary}[of Theorem~\ref{teo:robustness-strong-dich-bounds}]
Let $\mu$ be a discrete growth rate and assume that $(A_m)_{m \in \N}$ admits a strong nonuniform logarithmic dichotomy.
Assume further that, for all $m \in \N$ and $\lambda,\mu \in I$, there is $c>0$ such that
$$\|B_m(\lambda)\| \le \frac{\ln((m+2)/(m+1))}{(\ln(m+2))^{1+\eps}} c,$$
where $\eps>0$ is the exponent of $\mu_n$ the definition of strong nonuniform $\mu$-dichotomy.
Then, if $c>0$ is sufficiently small, the sequence $(A_m+B_m(\lambda))_{m \in \N}$ admits a strong nonuniform $\mu$-dichotomy for each $\lambda \in I$. Additionally, if there is $d>0$ such that
$$\|\left(B_m(\lambda)-B_m(\mu)\right)\| \le d \frac{\ln((m+2)/(m+1))}{(\ln(m+2))^{1+\eps}} |\lambda-\mu|,$$
for all $m \in \N$ and $\lambda,\mu \in I$, we can choose the projections $P_{m,\lambda}$ in such a way that $\lambda \mapsto P_{m,\lambda}$ is locally Lipschitz.
\end{corollary}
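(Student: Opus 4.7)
This statement is a direct corollary of Theorem~\ref{teo:robustness-strong-dich-bounds} applied to the logarithmic growth rate $\nu = (\log(n+1))_{n \in \N}$, and the plan is simply to verify that each hypothesis of that theorem is met in the present setting.

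First I would dispose of the minimal/maximal growth assumption~\eqref{teo:robustness-strong-dich-bounds-a1}, which is~\eqref{eq:COND-minimal-growth}. This has already been checked in the opening of the logarithmic subsection: the choice $q_n = (n+1)^2$ with $L_1 = 2$ and $L_2 = 2 + \log(5/4)/\log 2$ verifies it for $\nu$. No new work is needed.

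Second, I would translate the perturbation bound in the hypothesis into the form required by~\eqref{eq:robustness-strong-dich-bounds-1}, namely $\|\mu_{m+1}^\eps \phi_{m+1}^\mu B_m(\lambda)\| \le c$. For the logarithmic rate, $\mu_{m+1} = \log(m+2)$ and $\mu'_{m+1} = \mu_{m+2} - \mu_{m+1} = \log((m+3)/(m+2))$, so
\[
\mu_{m+1}^\eps \, \phi^\nu_{m+1} \;=\; \frac{(\log(m+2))^{1+\eps}}{\log((m+3)/(m+2))}.
\]
Multiplying the hypothesis $\|B_m(\lambda)\| \le c \, \log((m+2)/(m+1))/(\log(m+2))^{1+\eps}$ by this factor yields $\|\mu_{m+1}^\eps \phi_{m+1}^\nu B_m(\lambda)\| \le c \cdot \log((m+2)/(m+1))/\log((m+3)/(m+2))$. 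Since $\log(1+1/(m+1))/\log(1+1/(m+2)) \to 1$ and is uniformly bounded above by an absolute constant $K$, this gives~\eqref{eq:robustness-strong-dich-bounds-1} with the constant $cK$ in place of $c$. The same calculation applied to $B_m(\lambda) - B_m(\mu)$ verifies~\eqref{eq:robustness-strong-dich-bounds-2} with constant $dK$ in place of $d$.

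Having established both hypotheses of Theorem~\ref{teo:robustness-strong-dich-bounds}, its conclusion applied to $\mu = \nu$ yields at once that, if $c$ is small enough (after absorbing $K$), the perturbed sequence $(A_m + B_m(\lambda))_{m \in \N}$ admits a strong nonuniform $\nu$-dichotomy, i.e.\ a strong nonuniform logarithmic dichotomy, and that the projections $P_{m,\lambda}$ can be chosen so that $\lambda \mapsto P_{m,\lambda}$ is locally Lipschitz. There is no genuine obstacle here; the only point to watch is the minor index shift between $\log((m+2)/(m+1))$ appearing in the hypothesis and $\log((m+3)/(m+2))$ coming from $\phi^\nu_{m+1}$, which is harmless because the ratio of the two is bounded.
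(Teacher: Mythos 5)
Your proposal is correct and follows the same route the paper intends: the corollary is just Theorem~\ref{teo:robustness-strong-dich-bounds} specialized to $\nu=(\log(n+1))_{n\in\N}$, with the growth condition already verified at the start of the subsection and the perturbation bounds translated through $\phi^\nu_{m+1}$. Your observation that the stated numerator $\ln((m+2)/(m+1))$ differs from the exact $\ln((m+3)/(m+2))$ coming from $\phi^\nu_{m+1}$ only by a uniformly bounded factor, absorbed into the smallness of $c$, correctly resolves the only point of friction.
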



\begin{thebibliography}{99}                                                                                               %

\bibitem{Czarnecki.Rifford.TAMS.2006}{Czarnecki, Marc-Olivier and Rifford, Ludovic},
     \textit{Approximation and regularization of {L}ipschitz functions:
              convergence of the gradients}, Trans. Amer. Math. Soc. 358, 4467--4520
              (2006).

\bibitem{B.D.V.AH.2018} L. Barreira, D. Dragi\v cevi\'c and C. Valls, \textit{Admissibility and Hyperbolicity}, Springerbriefs In Mathematics, Springer, 2018.

\bibitem{BDV.admis} L. Barreira, D. Dragi\v cevi\'c and C. Valls, Admissibility and nonuniformly hyperbolic sets, \textit{Electronic Journal of Qualitative Theory of Differential Equations} 10 (2016) 1–15 .

\bibitem{Ba.Me.Po}
M.~G. Babu{\c{t}}ia, M.~Megan, I.-L. Popa, On {$(h,k)$}-dichotomies for
  nonautonomous linear difference equations in {B}anach spaces, Int. J. Differ.
  Equ. (2013) Art. ID 761680, 7 pages.

\bibitem{Ba.Ch.Va.2013}
L.~Barreira, J.~Chu, C.~Valls, Lyapunov functions for general nonuniform
  dichotomies, Milan J. Math. 81~(1) (2013) 153--169.

\bibitem{Ba.Va.2008-2}
L.~Barreira, C.~Valls, Growth rates and nonuniform hyperbolicity, Discrete
  Contin. Dyn. Syst. 22~(3) (2008) 509--528.

\bibitem{Ba.Va.2008-1}
L.~Barreira, C.~Valls, Stability of nonautonomous differential equations, vol.
  1926 of Lecture Notes in Mathematics, Springer, Berlin, 2008.

%\bibitem{Ba.Pe.2002}{L. Barreira, Ya. Pesin}, \textit{Lyapunov Exponents and
%Smooth Ergodic Theory}, Univ. Lecture Ser. {23}, Amer. Math. Soc., Providence,
%RI. (2002).
%
%\bibitem{Ba.Pe.2002}{L. Barreira, Ya. Pesin}, \textit{Lyapunov Exponents and
%Smooth Ergodic Theory}, Univ. Lecture Ser. {23}, Amer. Math. Soc., Providence,
%RI. (2002).
%
%\bibitem{Ba.Po.Va}{L. Barreira, L.H. Popescu, C. Valls}, \textit{Hyperbolic
%sequences of linear operators and evolution maps}, Milan J. Math., in press.

\bibitem{Ba.Va.2009}
L.~Barreira, C.~Valls, Polynomial growth rates, Nonlinear Anal. 71~(11) (2009) 5208--5219.

%\bibitem{Ba.Va.2010}L. Barreira, C. Valls, \textit{Admissibility for
%nonuniform exponential contractions}, J. Differ. Equations 249, 2889--2904 (2010).

\bibitem{Be.Si.}
A.~J.~G. {Bento}, C.~{Silva}, Generalized nonuniform dichotomies and local stable manifolds, J. Dyn. Diff. Equat. 25 (2013), 1139-1158.

\bibitem{Be.Si.2013}
A.~J.~G. Bento, C.~M. Silva, Generalized nonuniform dichotomies and local
  stable manifolds, J. Dynam. Differential Equations 25~(4) (2013) 1139--1158.

\bibitem{Be.Si.2009}
A.~J.~G. Bento, C.~Silva, Stable manifolds for nonuniform polynomial
  dichotomies, J. Funct. Anal. 257~(1) (2009) 122--148.

\bibitem{Be.Si.2012}
A.~J.~G. Bento, C.~M. Silva, Stable manifolds for non-autonomous equations with
  non-uniform polynomial dichotomies, Q. J. Math. 63~(2) (2012) 275--308.

%\bibitem{Be.Lu.Me.Si.2017}{A. Bento, N. Lupa, M. Megan and C. Silva})
%\textit{Integral conditions for nonuniform $\mu$-dichotomy}, Discrete Contin. Dyn. Syst. B 22, 3063--3077 (2017).

\bibitem{Ch}
X.~Chang, J.~Zhang, J.~Qin, Robustness of nonuniform {$(\mu,\nu)$}-dichotomies
  in {B}anach spaces, J. Math. Anal. Appl. 387~(2) (2012) 582--594.

%\bibitem{Ch.La.1999}{C. Chicone, Yu. Latushkin}, \textit{Evolution Semigroups
%in Dynamical Systems and Differential Equations}, Math. Surveys Monogr. {70},
%Amer. Math. Soc., Providence, RI. (1999).

\bibitem{Co} W. Coppel, \textit{Dichotomies in Stability Theory}, Lecture Notes in Mathematics {629}, Springer,
New York, (1981)

\bibitem{Dal}{Ju.L. Dalecki\v\i, M.G. Kre\v\i n}, \textit{Stability of
Solutions of Differential Equations in Banach Space}, Transl. Math. Monogr.
{43}, Amer. Math. Soc., Providence, RI. (1974).

\bibitem{Lu.Me.2014}
N.~Lupa, M.~Megan, Exponential dichotomies of evolution operators in {B}anach
  spaces, Monatsh. Math. 174~(2) (2014) 265--284.

\bibitem{Pr.Pr.Cr.2012}
C.~Preda, P.~Preda, A.~Craciunescu, A version of a theorem of {R}. {D}atko for
  nonuniform exponential contractions, J. Math. Anal. Appl. 385~(1) (2012)

\bibitem{D.MN.2019}{D. Dragi\v cevi\'c}, \textit{Admissibility and nonuniform polynomial dichotomies}, Mathematische Nachrichten 293, 226--243 (2019).

%\bibitem{Eng}K.J. Engel, R. Nagel, \textit{One-Parameter Semigroups for
%Evolution Equations}, Springer (2000).

\bibitem{Ma.Sch} J. Massera, J. Sch\"affer, \textit{Linear Differential Equations and Function Spaces}, Pure and Applied
Mathematics {21}, Academic, New York, (1966).

\bibitem{Mi.Ra.Sc.1998} Nguyen Van Minh, F. R\"{a}biger, R. Schnaubelt,
\textit{Exponential stability, exponential expansiveness and exponential
dichotomy of evolution equations on the half-line}, Integral Equations
Operator Theory {32}, 332--353 (1998).

\bibitem{Na.Pi.1995}
R.~Naulin, M.~Pinto, Roughness of {$(h,k)$}-dichotomies, J. Differential
  Equations 118~(1) (1995) 20--35.

%\bibitem{Huy}
%Nguyen Thieu Huy, \textit{Exponential Dichotomy of evolution
%equations and admissibility of function spaces on a half-line}, J. Funct.
%Analysis 235, 330--354 (2006).

%\bibitem{Pa.1983}{A. Pazy}, \textit{Semigroups of Linear Operators and
%Applications to Partial Differential Equations}, Appl. Math. Sci. {44},
%Springer (1983).

\bibitem{Pe.1930}{O. Perron}, \textit{Die Stabilit\"{a}tsfrage bei Differentialgleichungen}, Math. Z. 32, 703--728 (1930).

\bibitem{Pe.1976}
Y.~Pesin, Families of invariant manifolds that corresponding to nonzero
  characteristic exponents, Izv. Akad. Nauk SSSR Ser. Mat. 40~(6) (1976)
  1332--1379, (Russian) {E}nglish transl. Math. USSR-Izv. 10 (1976),
  1261--1305.

\bibitem{Pe.1977-1}
Y.~Pesin, Characteristic {L}japunov exponents, and smooth ergodic theory,
  Uspehi Mat. Nauk 32~(4) (1977) 55--112, (Russian) {E}nglish transl. Russ.
  Math. Surv. 32 (1977) 55-114.

\bibitem{Pe.1977-2}
Y.~Pesin, Geodesic flows in closed {R}iemannian manifolds without focal points,
  Izv. Akad. Nauk SSSR Ser. Mat. 41~(6) (1977) 1252--1288, (Russian) {E}nglish
  transl. Math. USSR-Izv. 11 (1977) 1195--1228.

\bibitem{Pi.1994}
M.~Pinto, Discrete dichotomies, Comput. Math. Appl. 28~(1-3) (1994) 259--270.

%\bibitem{P.P.P.}{P. Preda, A. Pogan, C. Preda}, \textit{Sch\"{a}ffer spaces
%and uniform exponential stability of linear skew-product semiflows}, J.
%Differ. Equations 212, 191--207 (2005).

%\bibitem{Rau}{R.T. Rau}, \textit{Hyperbolic evolution groups and dichotomic
%evolution families}, J. Dynam. Differential Equations 6, 335--350 (1994).

\bibitem{Sa.Ba.Sa.2013}{A.L. Sasu, M.G. Babu\c tia, and B. Sasu},
\textit{Admissibility and nonuniform exponential dichotomy on the half-line},
Bull. Sci. Math. 137, 466--484 (2013).
\end{thebibliography}
\end{document}